\newtheorem{teo}{Theorem}[section]
\newtheorem{prop}[teo]{Proposition}
\newtheorem{ddef}[teo]{Definition}
\newtheorem{ex}[teo]{Example}
\newtheorem{cor}[teo]{Corollary}
\newtheorem{obs}{Remark}
\newtheorem{lem}[teo]{Lemma}
\newcommand{\msf}{\mathsf}
\newcommand{\scr}{\mathscr}
\newcommand{\Q}{{\mathbb{Q}}}
\newcommand{\F}{{\mathcal{F}}}
\newcommand{\pn}{{\mathbb{P}^n}}
\newcommand{\C}{{\mathbb{C}}}
\newcommand{\Z}{{\mathbb{Z}}}
\newcommand{\wP}{{\mathbb{P}}}
\newcommand{\umapright}[1]
{\hskip3pt\smash{\mathop{\longrightarrow}\limits^{#1}}
\hskip3pt}
\newcommand{\ti}[1]{\tilde{#1}}
\newcommand{\wti}[1]{\widetilde{#1}}
\begin{document}
\title{\textsc{a bott  type residue formula on complex orbifolds}}
\author{ Maur\'icio Corr\^ea Jr., Miguel Rodriguez Peña and Márcio G. Soares}
\address{Maurício  Corrêa  Jr., Dep. Matemática
ICEx - UFMG, Campus Pampulha,
31270-901 Belo Horizonte - Brasil.}
\email{mauricio@mat.ufmg.br}

\address{M. Rodriguez Peña ,
Dep. Matemática
ICEx - UFMG, Campus Pampulha,
31270-901 Belo Horizonte - Brasil}
\email{miguel.rodriguez.mat@gmail.com}

\address{Márcio G. Soares,
Dep. Matemática
ICEx - UFMG, Campus Pampulha
31270-901 Belo Horizonte - Brasil.
}\email{msoares@mat.ufmg.br}

\begin{abstract} We prove  residual formulas for vector fields defined on compact complex orbifolds with isolated singularities and give some applications of these on weighted projective spaces.
\end{abstract}
\maketitle
\begin{scriptsize}
\textit{To Jos\'e Seade on the occasion of his 60th birthday}
\end{scriptsize}

\footnotetext[1]{ {\sl 2000 Mathematics Subject Classification.}
57R18, 55N32 } \footnotetext[2]{{\sl Key words:} orbifolds, Chern classes, indices of vector fields. } \footnotetext[3]{{\sl This work was partially supported by CNPq, CAPES, FAPEMIG and FAPERJ.} }

\section*{Introduction}

In this article we prove  residual formulas for vector fields defined on compact complex orbifolds with isolated singularities.  We then derive some applications, related to holomorphic foliations on weighted complex projective spaces and conclude by looking at Hirzebruch surfaces as resolutions of weighted projective planes of a simple type. These residual formulas are inspired by the well known residue formula of R. Bott for vector fields, with isolated zeros, defined on complex manifolds and also by several more recent results on this subject, which can be found in \cite{bss}.

\emph{V-manifolds} were introduced by I. Satake in \cite{sa} and later rediscovered by W. Thurston \cite{thurs}, which referred to them as \emph{orbifolds} and since then they are known by this name. These objects may present singularities, but these are all quotient singularities originating locally from the 
action of finite groups and hence are tractable. More precisely, an orbifold $X$ is a complex space endowed with the following property:  each point $p \in X$ possesses a neighborhood which is the quotient $Y/G_p$, where $Y$ is a complex manifold, say of dimension $n$, and $G_p$ is a properly discontinuous finite group of automorphisms of $Y$, so that locally we have a quotient map  $(Y, \tilde{p}) \umapright{\pi_p} (Y/G_p,p)=(X, p)$. If the isotropy group $G_p$ of $p$ is non-trivial, then $p$ is a singular point of $X$. The structure of orbifolds around its singularities was elucidated by D. Prill in \cite{pr}. We will also make use of results by R. Blache \cite{bla} concerning Chern classes and local Chern classes on orbifolds. An extensive account on orbifolds can be found in \cite{alr}. 

In Section \ref{bottorbi} we state and prove the main results of this note, which we briefly present now. 

\noindent{\bf{Theorem 1}}
\emph{Let $X$ be a compact orbifold of dimension $n$ with only isolated singularities, let $\scr{L}$  be a locally V-free sheaf of rank $1$ over $X$ and $L$ the associated line V-bundle. Suppose ${\xi}$ is a holomorphic section of ${TX\otimes L}$ with isolated zeros. For each zero $p$ of the section $\xi$, let $\tilde{\xi}= \pi_p^\ast \xi$ be a local lifting of $\xi$ via the quotient map $(Y, \tilde{p}) \umapright{\pi_p} (X,p)$ and let $J{\tilde{\xi}}=\displaystyle\left(\frac{\partial\tilde{\xi}_i}{\partial \tilde{z}_j}\right)_{1\leq i, j\leq n}$ be the matrix of the linear part of $\tilde{\xi}$. 
If $P$ is any invariant polynomial of degree $n$ and $L^{\vee}$ is the dual of $L$ then}
$$\int_X P(TX-L^{\vee})=\sum_{p \,|\, \xi(p)=0}\frac{1}{\# G_p}\mbox{Res}_{\,\tilde{p}}\left[\frac{P(J\tilde{\xi} )\,d\tilde{z}_1\wedge\dots\wedge d\tilde{z}_n}{\tilde{\xi}_1\dots\tilde{\xi}_n}\right]$$
\noindent\emph{where ${Res}_{\,\tilde{p}}\displaystyle\left[\frac{P(J\tilde{\xi} )\,d\tilde{z}_1\wedge\dots\wedge d\tilde{z}_n}{\tilde{\xi}_1\dots\tilde{\xi}_n}\right]$ is Grothendieck's point residue.}\\

\noindent{\bf{Theorem 2}}
\emph{Let $X$ be a compact complex orbifold of dimension $n$ with only isolated singularities, $\scr{L}$ a locally V-free sheaf of rank $1$ over $X$ and $L$ the associated line V-bundle. Let $\sigma : \wti{X} \longrightarrow X$ be a good resolution of $X$, $\mathcal{ E}$ the exceptional divisor, and let $\xi$ be a holomorphic section of $TX \otimes L$ with isolated zeros. Assume $(\sigma^\ast \scr{L})^{\vee \vee}$ is locally free. Then}
$$
\sum\limits_{p\,|\, \xi(p)=0} I_p(\xi)= \int\limits_{\wti{X}} c_n \left((\sigma^\ast(TX \otimes L))^{\vee \vee}\right) - \sum\limits_{p \in Sing(X)} \int\limits_{\wti{X}} c_n \left(p, (\sigma^\ast(TX \otimes L))^{\vee \vee}\right).
$$
\emph{In particular, if $\sigma^\ast(\scr{T}_X \otimes \scr{L})$ is reflexive, then}
$$
\sum\limits_{p \in Sing(X) \cap \{ \xi(p)=0\}} I_p(\xi)= \sum\limits_{\ti{p} \in \mathcal{E}\,|\, \sigma^\ast{\xi}(\ti{p})=0} I_{\ti{p}}(\sigma^\ast\xi) - \sum\limits_{p \in Sing(X)} \int\limits_{\wti{X}} c_n \left(p, \sigma^\ast(TX \otimes L)\right),
$$
\noindent\emph{where $I_p(\xi)$ is the Poincaré-Hopf index of the section $\xi$ at $p$}.

In Section \ref{appl}  we exploit some consequences of these results in weighted projective spaces $\wP^n_w$, among which an enumerative result counting the number of singularities of holomorphic foliations on these spaces and also, in a particular but interesting case, a condition for a holomorphic foliation on $\wP(1,1,k)$ to have a singularity at the singular point $\sf{e}_2 =[0:0:1]_k$ of $\wP(1,1,k)$. The latter is shown in two different ways, the first one is based on the enumerative result for $\wP^n_w$ mentioned above, and the second one is obtained by considering the Hirzebruch surfaces $\Sigma_k$ as good resolutions of $\wP(1,1,k)$. It's worth pointing out that if a holomorphic foliation $\mathcal{F}_{|\wP(1,1,k) \setminus \{{\sf{e}_2}\}}$ is given, around the singular point $\sf{e}_2 \in \wP(1,1,k)$, by a vector field $\xi$ or by a differential $1$-form $\eta$, then the lifted vector field $\ti{\xi}$ or the lifted $1$-form $\ti{\eta}$ on $(\C^3, 0)$, via $(\C^3, 0) \umapright{\pi_{\sf{e}_2}} (\C^3/G_{\sf{e}_2},{\sf{e}_2})$, extend holomorphically to $0$ and this extension may be non-vanishing at $0$. If this is the case we say that $\mathcal{F}$ is non-singular at $\sf{e}_2$ (see Example \ref{nonsing}).

\section{Preliminaries}\label{prelim}

\subsection{Orbifolds or V-manifolds}\label{orbi}

We will make use of the theory of Chern classes for pairs $(X, \mathscr{S})$, as developed by R. Blache in \cite{bla}, where $X$ is a complex orbifold with isolated quotient singularities and $\mathscr{S}$ is a coherent sheaf on $X$. In fact, what we will exploit is a little more restricted, namely, the Chern classes for locally free V-sheaves on orbifolds. So, we start by briefly recalling the essential objects we will consider. We refer the reader to \cite{alr},  \cite{bla} and \cite{sa} for the definitions and results on orbifolds that we present in this subsection.

\begin{ddef}\label{v-m}
An orbifold is a connected paracompact complex space $X$ satisfying the property that
each point $x \in X$ has an open neighborhood $U \subset X$ which is a quotient $U \cong V/G$, where $V$ is a complex manifold, $G$ is a finite subgroup of the group of biholomorphisms of $V$, with $V$ and $G$ depending on $x$. $(X,x)$ denotes the germ of the orbifold $X$ at the point $x$ and is referred to as a quotient germ. Morphisms of orbifolds are the holomorphic maps between them.
\end{ddef}

Since the possible singularities appearing in a orbifold $X$ are quotient singularities, we have that $X$ is reduced, normal, Cohen-Macaulay and with only rational singularities. We denote its singular set by $Sing(X)$ and the regular part of $X$ by $X_{reg} = X \setminus Sing(X)$. The dimension of $X$ is the dimension of the complex manifold $X_{reg}$. 

The local structure of orbifolds around its singularities was described by D. Prill in \cite{pr}: if $\dim X =n$, then every quotient germ $(X,x)$ determines a unique (up to conjugation) small finite group $G_x \subset Gl(n, \C)$ such that $(X,x) = (Y,y) / G_x$, where $(Y, y)=(\C^n, 0)$, with natural projection $\pi : (Y,y) \rightarrow (X,x)$ which is called the \textsl{local smoothing covering} of $X$ at $x$. Note that $x \in Sing(X)$ precisely when the isotropy group $G_x$ is non-trivial. We point out that $G_x \cong {\pi}_1 ((X,x) \setminus Sing(X))$ and for this reason $G_x$ is called the \textsl{local fundamental group} of $X$ at $x$. Recall that  $G_x$ is \textit{small} if $\mathrm{codim_{\C}\, Fix}\, G_x \geq 2$, or equivalently, if no element of $G_x$ has $1$ as an eigenvalue of multiplicity $n-1$.

Satake's fundamental idea was to use local smoothing coverings to extend the definition of usual known objects to orbifolds. For instance, smooth differential k-forms on $X$ are $C^\infty$-differential k-forms $\omega$ on $X_{reg}$ such that the pull-back $\pi^\ast \omega$ extends to a $C^\infty$-differential k-form on every local smoothing covering $\pi : (Y,y) \rightarrow (X,x)$ of $X$. 
Hence, if $\omega$ is a smooth $2n$-form on $X$ with compact support $\mathrm{Supp} (\omega) \subset (X,x)$ then, by definition, 
\begin{equation}\label{intx}
\int\limits_X \omega = \frac{1}{\# G_x}\, \int\limits_{Y} \pi^\ast \omega.
\end{equation}
If now the $2n$-form $\omega$ has compact support, we use a partition of unity 
$\{\rho_\alpha, U_\alpha \}_{ \alpha \in {\mathcal A}}$, where $\pi_\alpha: (\widetilde{U}_\alpha, \tilde{p}_\alpha)\longrightarrow (U_\alpha, p_\alpha)$ is a local smoothing covering  and $\sum \rho_\alpha (x) =1$ for all $x \in  \mathrm{Supp} (\omega)$, and set

\begin{equation}\label{intX}
\int\limits_X \omega = \sum\limits_\alpha \int\limits_X \rho_\alpha \omega.
\end{equation}

Analogously to the manifold case we have the sheaf $\mathscr{E}^k$
of $C^\infty$-differential k-forms on $X$, the exterior differentiation $d: \mathscr{E}^k \rightarrow \mathscr{E}^{k+1}$ and $\mathscr{E}^\ast = \bigoplus_{0 \leq k \leq 2n} \mathscr{E}^k$. 

Stokes' formula also holds, in particular, if $X$ has dimension $n$ and $\omega \in \Gamma_c(X, \mathscr{E}^{2n-1})$ (compact supports) then
\begin{equation}\label{stokes}
\int\limits_X d \omega =0.
\end{equation}  
The de Rham complexes $H_{dR}^\ast (X, \C) = \bigoplus\limits_{0 \leq k \leq 2n} H_{dR}^k (X, \C)$ and with compact support $H_{dRc}^\ast (X, \C) = \bigoplus\limits_{0 \leq k \leq 2n} H_{dRc}^k (X, \C)$ are then well defined and Poincaré's duality holds: the pairing
\begin{equation}\label{poindual}
\begin{array}{ccc}
H_{dR}^k (X, \C) \times H_{dRc}^{2n-k} (X, \C) &\longrightarrow& \C\\
([\omega],[\eta]) &\longmapsto& \displaystyle\int\limits_X \omega \wedge \eta
\end{array}
\end{equation}
is nondegenerate. As in the case of manifolds, de Rham's theorem holds on orbifolds and we have $H^\ast(X, \C) \cong H_{dR}^\ast (X, \C)$, where $H^\ast(X, \C)$ is the complex of singular cohomology rings with complex coefficients and, by considering \v{C}ech cohomology we also get $H_{dR}^\ast(X, \C) \cong \check{H}^\ast (X, \C)$.

By invoking Hironaka's resolution of singularities we conclude that, given $x \in X$, there exists an open neighborhood $U \subset X$ of $x$  such that, if $\sigma: \widetilde{U} \longrightarrow U$ is a resolution, $\widetilde{Y}$ is a resolution of the normalization of the fiber product $\widetilde{U} \times_U Y$, where $\pi : Y \longrightarrow U$ is a local smoothing covering, then there is a commutative diagram 
\begin{equation}\label{diag1}
\begin{matrix}
\widetilde{Y} & \umapright{\displaystyle \phi} & Y\\
& & \\
\psi \downarrow& & \downarrow \pi\\
& & \\
\widetilde{U} & \umapright{\displaystyle \sigma} & U\\
\end{matrix}
\end{equation}
with $\phi$ bimeromorphic and biholomorphic over $\pi^{-1} (U \setminus Sing (U))$ and
$\psi$ generically finite of $\deg \psi = \deg \pi$ and unramified over $\widetilde{U} \setminus \sigma^{-1} ( Sing (U))$.

\begin{ddef}\label{g-r}
Given an irreducible normal complex space $Y$ and $W \subsetneqq Y$ a connected compact analytic subset, let $(Y, W)$ denote the germ of $Y$ along $W$. A connected open subset $V \subset Y$ is a good representative of $(Y, W)$ if $W \subset V$, $\partial V$ is a real $C^\infty$-manifold and there is a contraction of $V$ onto $W$. If $V$ is a good representative of $(Y, W)$ we define $H^\ast_{dRc}((Y, W)) := H^\ast_{dRc}(V, \C)$.
\end{ddef}

The definition is good since $H^\ast_{dRc}(V) =H^\ast_{dRc}(V')$ if $V$ and $V'$ are good representatives of $(Y, W)$. Note that if both $Y$ and $W$ are manifolds with $\mathrm{codim}_{\C} W=1$, then $H^k_{dRc}((Y, W)) \cong H^{k-2}_{dR}(W)$ for $k \geq 2$.

\begin{ddef}\label{goodres}
A good resolution of an orbifold $X$ is a resolution $\sigma: \widetilde{X} \longrightarrow X$ whose exceptional divisor $\mathcal{E}$ has only normal crossings. If $X$ has only isolated singularities we also require that $\sigma(\mathcal{E}) = Sing(X)$.
\end{ddef}

Good resolutions always exist.

\subsection{Locally V-free sheaves, V-bundles and Chern classes}\label{vchern}

We recall the concept of a coherent G-sheaf on a complex space:

\begin{ddef}\label{coh}
Let $\mathscr{S}$ be a coherent sheaf on a complex space $X$, let $G$ be a group and $G\longrightarrow Aut(X)$ be a group homomorphism. $G$ acts on $\mathscr{S}$ over $X$ provided:
\begin{itemize}
\item[(i)] given an open subset $U \subseteq X$ such that $g(U)=U$ for all $g \in G$, $G$ acts on the $\C$-linear space $\Gamma(U, \mathscr{S})$
\item[(ii)] these actions are compatible with all restriction maps induced by open inclusions $U' \hookrightarrow U$,
\item[(iii)] $g (f \cdot \xi) = g^\ast f \cdot g(\xi)$ for all $\xi \in \Gamma(U, \mathscr{S})$, $f \in \Gamma(U, \mathscr{O}_U)$, 
$g \in G$.
\end{itemize}
\end{ddef}

In case $G$ acts properly discontinuously on $X$ and $\mathscr{S}$ is a coherent $G$-sheaf on $X$ then, for all $x \in X$ and for all $g \in G$, the action on $X$ induces an identification $\mathscr{O}_{X,x} \simeq \mathscr{O}_{X,g\cdot x}$ and analogously $\mathscr{S}_x \simeq \mathscr{S}_{g\cdot x}$ by the action on $\mathscr{S}$. In particular, if $g \cdot x = x$ for all $g$, then $G$ acts on the stalk $\mathscr{S}_x$.

\begin{ddef}\label{vfree}
Let $(X,x)$ be a quotient germ and $\pi : (Y,y) \longrightarrow (X,x)$ be a local smoothing covering. A coherent sheaf $\mathscr{S}$ on $(X,x)$ is V-free if the following equivalent conditions are satisfied:

\begin{itemize}
\item[(i)] $\mathscr{S}$ is reflexive (that is, $\mathscr{S}$ is isomorphic to $\mathscr{S}^{\vee\vee}$) and $(\pi^\ast \mathscr{S})^{\vee\vee}$ is a free sheaf.

\item[(ii)] There is a free sheaf $\breve{\mathscr{S}}$ on $(Y,y)$ such that $G_x$ acts on $\breve{\mathscr{S}}$ and ${\mathscr{S}} \simeq \pi_{\ast}^{G_x}(\breve{\mathscr{S}})$, where $\pi_{\ast}^{G_x}(\breve{\mathscr{S}})$ is the maximal subsheaf of 
$\pi_{\ast}(\breve{\mathscr{S}})$ on which $G_x$ acts trivially.
\end{itemize}

A coherent sheaf $\mathscr{S}$ on an orbifold $X$ is locally V-free if 
$\mathscr{S}_{\vert (X,x)}$ is V-free for all $x \in X$. 
\end{ddef}

Remark that $\breve{\mathscr{S}}= (\pi^\ast \mathscr{S})^{\vee\vee}$.

\begin{ddef}\label{vbundle}
A V-bundle $E$ on an orbifold $X$ is a holomorphic vector bundle $E_{reg}$ on $X_{reg}$ such that $\mathsf{i}_\ast(\mathscr{O}(E_{reg}))$ is a locally V-free sheaf. Here $\mathsf{i} : X_{reg} \hookrightarrow X$ is the inclusion and  $\mathscr{O}(E_{reg})$ is the sheaf of sections of $E_{reg}$.
\end{ddef}

Observe that a holomorphic vector bundle $E_{reg}$ on $X_{reg}$ is a V-bundle on $X$ if, and only if, for every local smoothing covering $\pi : (Y,y) \longrightarrow (X,x)$, there is a holomorphic vector bundle $\breve{E}_Y$ on $(Y,y)$ and an action of $G_x$ on $\breve{E}_Y$
with $$\left({({\breve{E}}_Y})_{\vert (Y,y) \setminus \mathrm{Fix}\, G_x}\right)/G_x \simeq {E_{reg}}_{\vert (X,x)}.$$ Also, $\breve{E}_Y$ and the $G_x$ action on $\breve{E}_Y$ are uniquely determined by $E_{reg}$. 

Now, locally V-free sheaves have the following property:  if $\mathscr{S}$ is a locally V-free sheaf on $X$, then we can associate to it a V-bundle $E$ on $X$ (unique up to isomorphism) satisfying: $\mathscr{S}$ is isomorphic to the sheaf of sections of $E$.

To construct Chern classes we will use the Chern-Weil procedure via metric connections. First we need to define metrics.

\begin{ddef}\label{vmetric}
Given an orbifold $X$ and a locally V-free sheaf $\mathscr{S}$ on $X$, let $E$ be the V-bundle associated to $\mathscr{S}$. A V-metric $h$ on $\mathscr{S}$ is a hermitian metric on $E_{reg}$ (the restriction of $E$ to $X_{reg}$) such that $\pi^\ast h$ admits an extension to a hermitian metric $\breve{h}$ on $\breve{E}_Y$ for all local smoothing coverings $\pi : (Y,y) \longrightarrow (X,x)$.
\end{ddef}

As in the case of manifolds, by using partitions of unity we can construct V-metrics. 

Given an orbifold $X$ of dimension $n$, a locally V-free sheaf $\scr{S}$ of rank $r$ and a V-metric $h$ on $\scr{S}$ we construct classes $c_i (X, \scr{S}, h) \in H^{2i}_{dR}(X, \C)$, $i=0, \dots, n$, as follows:

On the regular part $X_{reg}$ take the metric connection $\nabla_h$ on $\scr{S}$ associated to $h$ and its curvature operator $K_{\nabla_h} = \nabla_h \circ \nabla_h$, which is represented by a $r \times r$ matrix of smooth 2-forms, say $\Theta_h$, in any local trivialization of $\scr{S}$. If $M(r, \C)$ is the algebra of $r \times r$ matrices over $\C$,  let $C_i : M(r, \C) \longrightarrow \C$ be the $i$-th elementary symmetric invariant polynomial and define the Chern form $c_i(h) := C_i \left(\displaystyle \frac{\sqrt{-1}}{2 \pi}\Theta_h\right) \in \Gamma(X_{reg}, \scr{E}^{2i})$. Doing this on all local smoothing coverings we conclude that in fact $c_i(h) \in \Gamma(X, \scr{E}^{2i})$. It follows, as in the $C^\infty$ case, that $d(c_i(h))=0$ for all $i$ and that
$c_i(\scr{S}) = [c_i(h)] \in H^{2i}_{dR}(x, \C)$ is independent of the metric.

As usual, the \emph{total Chern class} of $\scr{S}$ is defined by $c(\scr{S}):= c_0(\scr{S}) + \dots + c_r(\scr{S})+ \dots + c_n(\scr{S})$, with $c_0(\scr{S}) := 1$. 

If $\scr{T}_X$ is the tangent sheaf of $X$ (which is reflexive), then the \emph{Chern classes} of $X$ are, by definition, $c_i(X) = c_i (\scr{T}_X)$, $i=0, \dots, n$. Remark that $\scr{T}_X$ is locally V-free and ${\scr{T}_X}_{|X_{reg}}$ is the holomorphic tangent sheaf of the complex manifold $X_{reg}$.

The usual properties of the Chern classes hold with the following adjustments: a sequence $\scr{S}_m \longrightarrow \cdots \longrightarrow \scr{S}_1$ of locally V-free sheaves on $X$ is \emph{V-exact} provided the sequence
$$
(\pi^\ast \scr{S}_m)^{\vee \vee} \longrightarrow \cdots \longrightarrow (\pi^\ast \scr{S}_1)^{\vee \vee}
$$
is exact, for all local smoothing coverings of $X$. With this in mind we have:

\begin{obs}\label{cclass}
Properties of the Chern classes.
\begin{itemize}
\item[(i)] if $\scr{S}$ is locally V-free and has rank $r$, then $c_i(\scr{S}) =0$ for $i > r$.

\item[(ii)] $c_i(\scr{S}) = (-1)^i \, c_i(\scr{S}^{\vee})$ for $i \geq 0$.

\item[(iii)] $c(\scr{S})=c(\scr{S}')\, c(\scr{S}'')$ whenever we have a V-exact sequence of locally V-free sheaves
$0 \longrightarrow c(\scr{S}') \longrightarrow c(\scr{S}) \longrightarrow c(\scr{S}'') \longrightarrow 0$.

\item[(iv)] $c_i ((\scr{S} \otimes \scr{L})^{\vee \vee}) = \sum_{k=0}^i {{r-k} \choose {i-k}} c_k(\scr{S})\, c_1(\scr{L})^{i-k}$ for $0 \leq i \leq r$, where $\scr{L}$ is any locally V-free sheaf of rank $1$ on $X$.

\item[(v)] $c(\scr{S}) \in H^\ast (X, \Q)$.
\end{itemize}
\end{obs}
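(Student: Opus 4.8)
The plan is to reduce every statement to the classical theory of Chern--Weil forms on the complex manifold $X_{reg}$, and then to observe that the resulting identities of forms are compatible with the local smoothing coverings, so that they descend to genuine identities of V-Chern classes on $X$. Recall that by construction $c_i(\scr{S}) = [c_i(h)]$, where $c_i(h) = C_i\bigl(\frac{\sqrt{-1}}{2\pi}\Theta_h\bigr) \in \Gamma(X_{reg},\scr{E}^{2i})$ is built from the curvature $\Theta_h$ of a V-metric, and that each such form lifts, on a local smoothing covering $\pi:(Y,y)\to(X,x)$, to the Chern--Weil form of the $G_x$-equivariant free sheaf $\breve{\scr{S}} = (\pi^\ast\scr{S})^{\vee\vee}$. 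Thus any pointwise algebraic identity among elementary symmetric polynomials of curvature matrices proved on $X_{reg}$ automatically holds after pullback and hence defines the correct class on $X$.

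For (i), in a local trivialization $\Theta_h$ is an $r\times r$ matrix of $2$-forms and $C_i$ is the sum of its principal $i\times i$ minors; since there are no such minors for $i>r$ we get $c_i(h)\equiv 0$ as a form, hence $c_i(\scr{S})=0$. For (ii), I would choose the dual V-metric $h^\vee$ on $\scr{S}^\vee$; the curvature of the induced connection on $E_{reg}^\vee$ is $-\Theta_h^{\mathsf{T}}$, and since $C_i$ is homogeneous of degree $i$ and invariant under transposition one has $C_i(-\Theta_h^{\mathsf{T}}) = (-1)^i C_i(\Theta_h)$, giving $c_i(h^\vee)=(-1)^i c_i(h)$ at the level of forms. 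For (iv), on $X_{reg}$ the reflexive hull $(\scr{S}\otimes\scr{L})^{\vee\vee}$ is the locally free sheaf $E_{reg}\otimes L_{reg}$; taking the tensor-product V-metric its curvature is $\Theta_{h_E}\otimes \mathrm{Id} + \mathrm{Id}\otimes\Theta_{h_L}$, so the Chern roots are $a_j + t$ with $t=c_1(\scr{L})$, and expanding $\prod_{j=1}^r(1+a_j+t)$ produces exactly $c_i = \sum_{k=0}^i \binom{r-k}{i-k}c_k(\scr{S})\,c_1(\scr{L})^{i-k}$.

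For (iii), a V-exact sequence means that on every local smoothing covering $0\to\breve{\scr{S}}'\to\breve{\scr{S}}\to\breve{\scr{S}}''\to 0$ is exact as a sequence of $G_x$-equivariant free sheaves; restricting to the regular part it gives a short exact sequence of holomorphic bundles $0\to E'_{reg}\to E_{reg}\to E''_{reg}\to 0$. Using a partition of unity I would build a $C^\infty$ splitting, chosen $G_x$-equivariantly on each cover so that the resulting metric is a genuine V-metric, putting the metric connection into upper-triangular form; the classical Chern--Weil argument then yields $c(h)=c(h')\,c(h'')$ up to an exact form, and passing to cohomology gives the multiplicativity $c(\scr{S})=c(\scr{S}')\,c(\scr{S}'')$.

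Finally, (v) is the one property that is not visible at the level of Chern--Weil forms, and it is where I expect the real work to sit. The de Rham classes $c_i(\scr{S})$ lie a priori only in $H^{2i}_{dR}(X,\C)$, and to pin down their rationality I would appeal to Blache's construction \cite{bla}: the pulled-back bundle $\breve{E}$ has integral Chern classes on the cover, and under the transfer isomorphism $H^{2i}(X,\Q)\cong H^{2i}(Y,\Q)^{G_x}$ these descend to rational classes, the denominators being controlled by $\#G_x$; identifying our Chern--Weil representatives with Blache's classes then shows $c(\scr{S})\in H^\ast(X,\Q)$. The main obstacle is thus not any single differential-geometric computation but rather this last comparison, namely verifying that the metric (Chern--Weil) definition of the V-Chern classes coincides with the topological and sheaf-theoretic one of \cite{bla}, for which integrality on the covers, and hence rationality downstairs, is available.
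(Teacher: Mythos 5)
Your treatment of (i)--(iv) is consistent with the paper, which in fact offers no proof of these items: they are stated as a Remark, with the classical Chern--Weil arguments understood to carry over verbatim to V-metrics (the one genuine adjustment being the notion of V-exactness in (iii), which you correctly read as exactness of the pulled-back reflexive hulls on every local smoothing covering). Those parts are fine and match the paper's setup.

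The problem is (v), which is also the only item for which the paper sketches an actual argument. Your proposed mechanism --- ``$\breve{E}$ has integral Chern classes on the cover, and under the transfer isomorphism $H^{2i}(X,\Q)\cong H^{2i}(Y,\Q)^{G_x}$ these descend'' --- does not work as stated, because the local smoothing coverings $\pi:(Y,y)\to(X,x)$ are germs $(\C^n,0)$: they are contractible, so integrality of Chern classes there is vacuous and gives no control over the global class on $X$. A transfer argument would instead need a global finite cover of $X$ by a manifold, which a general orbifold does not admit (it exists for $\wP^n_w$ via $\varphi_w$, but not in the generality of the Remark). The paper's route is different: one forms the projectivization $\wP(\scr{S})\umapright{p}X$ and shows that $\xi=c_1(\scr{O}_{\wP(\scr{S})}(1))$ --- a rational class, since it is $c_1$ of a line V-bundle and $\#G$-denominators suffice there --- satisfies the Grothendieck relation $\xi^r+\xi^{r-1}p^\ast c_1(\scr{S})+\dots+p^\ast c_r(\scr{S})=0$ in $H^\ast(\wP(\scr{S}),\Q)$. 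Since $H^\ast(\wP(\scr{S}),\Q)$ is a free $H^\ast(X,\Q)$-module on $1,\xi,\dots,\xi^{r-1}$, the coefficients $c_i(\scr{S})$ are uniquely determined elements of $H^{2i}(X,\Q)$, which is precisely the rationality claim. To repair your argument you should replace the local transfer step by this projective-bundle (equivalently, splitting-principle) argument, reducing the rank-$r$ case to the rank-$1$ case where the denominator bound by the orders of the isotropy groups is actually available.
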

\noindent Property \emph{(v)} above is due to the natural use of local smoothing coverings which implies, in many situations, that we divide by the orders of the groups involved. It follows by showing that 
$$
\xi^r + \xi^{r-1} p^\ast c_1(\scr{S}) + \dots + \xi  p^\ast c_{r-1}(\scr{S}) + p^\ast c_r(\scr{S}) =0
$$
in $H^\ast(\wP(\scr{S}), \Q)$, where $\wP(\scr{S}) \umapright{p} X$  is the projectivization of $\scr{S}$ over $X$, with $\xi \in H^2(\wP(\scr{S}), \Q)$ given by $c_1(\scr{O}_{\wP(\scr{S})} (1))$ and $c_i(\scr{S}) \in H^{2i}(X, \Q)$ uniquely determined.

Chern numbers are defined as in the smooth case, that is, suppose $X$ is a compact orbifold of dimension $n$ and let $\scr{S}$ be a locally V-free sheaf on $X$. For $\mathbb{K}= \Q \: \mathrm{or}\: \C$ the map
\begin{equation}\label{K}
\begin{array}{ccc}
H^{2n}_{dR} (X, \mathbb{K}) &\longrightarrow & \mathbb{K}\\
\, \\
{[ \omega]}  & \longmapsto & \displaystyle [X]  \frown [\omega]
\end{array}
\end{equation}
gives an identification $H_{dR}^{2n} (X, \mathbb{K}) \simeq \mathbb{K}$. In $\mathbb{K}[Z_1, \dots, Z_n]$ give weight (or degree) $i$ to $Z_i$. If $P$ is a quasihomogeneous monomial of degree $n$, $P=  Z_1^{\nu_1}\dots Z_n^{\nu_n}$, $\nu_1 + 2\nu_2 + \dots + n\nu_n =n$, then the number
$P(c(\scr{S})) := c_1(\scr{S})^{\nu_1} \cdots c_n(\scr{S})^{\nu_n} \in \mathbb{K}$, obtained via (\ref{K}), is the \emph{Chern number of $\scr{S}$ with respect to $(\nu_1, \dots, \nu_n)$}. If 
$P \in \mathbb{K}[Z_1, \dots, Z_n]$ is a quasihomogeneous polynomial of degree $n$, then the number $P(c(\scr{S})) \in \mathbb{K}$ is defined in the same way.

\subsection{Local Chern classes}\label{locchern}

R. Blache \cite{bla} developed a natural concept of local Chern classes on orbifolds, which will be very useful for our purposes. We briefly describe it. 

Let $\sigma : (\widetilde{X}, \mathcal{E}) \longrightarrow (X, x)$ be a good resolution of an isolated quotient singularity as in Definition \ref{goodres} and suppose $\breve{\scr{S}}$ is a locally free sheaf on $\widetilde{X}$ such that $\scr{S}=(\sigma_\ast \breve{\scr{S}})^{\vee \vee}$ is a V-free sheaf on $X$. Let $U \subset W$ be open neighborhoods of $x$ with $U$ relatively compact and such that $U$ and $W$ are good representatives of $(X,x)$ as in Definition \ref{g-r}. Then:

\begin{itemize}
\item[(i)] There exist metrics $h$ and $\breve{h}$ on $\scr{S}$ and $\breve{\scr{S}}$, respectively, such that $h_{| W \setminus U} = \breve{h}_{| \widetilde{W} \setminus \widetilde{U}}$.

\item[(ii)] Let $c_i(h) \in \Gamma(W, \scr{E}^{2i})$ and $c_i(\breve{h}) \in \Gamma(\widetilde{W}, \scr{E}^{2i})$ be the Chern forms associated to $(W, \scr{S}, h)$ and
$(\widetilde{W}, \breve{\scr{S}}, \breve{h})$, respectively. Then 
\begin{equation}\label{loccherncl}
c_i (x, \breve{\scr{S}}) := c_i (\widetilde{X}, \mathcal{ E}, \breve{\scr{S}}) := c_i(\breve{h}) - \sigma^\ast c_i(h) \in H_{dRc}^{2i} ((\widetilde{X}, \mathcal{E}), \C )
\end{equation}
does not depend on $U$, $W$, $h$ and $\breve{h}$ and is called the \emph{$i$-th local Chern class of $\breve{\scr{S}}$ along $\mathcal{E}$}.
\end{itemize}

Remark that by $(i)$ the local Chern class $c_i (x, \breve{\scr{S}})$ localizes at $x$. The relation between the local Chern numbers and the Chern numbers is given in Proposition 3.14 of \cite{bla} and reads:

\begin{prop}\label{locglob}

Let $\sigma : \widetilde{X} \longrightarrow X$ be a good resolution of a compact orbifold of dimension $n$ with only isolated singularities and let $\breve{\scr{S}}$ be a locally free sheaf on $\widetilde{X}$ with $\scr{S}=(\sigma_\ast \breve{\scr{S}})^{\vee \vee}$ a locally V-free sheaf on $X$. If $P \in \mathbb{C}[Z_1, \dots, Z_n]$ is an invariant polynomial of degree $n$ then,
\begin{equation}
\int\limits_X P(c(\scr{S})) = \int\limits_{\widetilde{X}} P(c(\breve{\scr{S}})) - \sum\limits_{x \in Sing(X)} \int\limits_{\widetilde{X}} P(c(x, \breve{\scr{S}})).
\end{equation}
\end{prop}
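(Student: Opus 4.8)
The plan is to establish the identity at the level of Chern--Weil representatives and to localize the discrepancy between $X$ and $\widetilde{X}$ at the finitely many points of $Sing(X)$. Following the construction \eqref{loccherncl}, I would first fix, for each $x \in Sing(X)$, relatively compact good representatives $U_x \subset W_x$ and choose global V-metrics $h$ on $\scr{S}$ and $\breve{h}$ on $\breve{\scr{S}}$ arranged so that $\breve{h} = \sigma^\ast h$ outside $\widetilde{U} := \bigsqcup_x \widetilde{U}_x$; this is possible by the local statement in the construction of the local Chern classes together with a partition of unity. Since Chern forms are natural under pullback of the metric connection, it follows that $c_i(\breve{h}) = \sigma^\ast c_i(h)$, and hence $P(c(\breve{h})) = \sigma^\ast P(c(h))$, on all of $\widetilde{X}\setminus\widetilde{U}$. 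Therefore the closed $2n$-form $P(c(\breve{h})) - \sigma^\ast P(c(h))$ is supported in $\widetilde{U}$ and, since the $\widetilde{U}_x$ are disjoint, decomposes as a sum of pieces supported near the individual exceptional fibres; extending \eqref{loccherncl} to the invariant polynomial $P$, the piece supported over $x$ is precisely the representative of $P(c(x,\breve{\scr{S}})) \in H^{2n}_{dRc}((\widetilde{X},\mathcal{E}))$ used to define the corresponding local Chern number.

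The next step is to compare the two integrals term by term. Computing Chern numbers by these representatives, $\int_X P(c(\scr{S})) = \int_X P(c(h))$ and $\int_{\widetilde{X}} P(c(\breve{\scr{S}})) = \int_{\widetilde{X}} P(c(\breve{h}))$, and I would split
\begin{equation*}
\int_{\widetilde{X}} P(c(\breve{h})) = \int_{\widetilde{X}\setminus\widetilde{U}} \sigma^\ast P(c(h)) + \int_{\widetilde{U}} \sigma^\ast P(c(h)) + \sum_{x\in Sing(X)} \int_{\widetilde{X}} P(c(x,\breve{\scr{S}})).
\end{equation*}
Because $\sigma$ is a biholomorphism from $\widetilde{X}\setminus\mathcal{E}$ onto $X_{reg}$ and $\mathcal{E}$ has measure zero, the first integral equals $\int_{X\setminus U} P(c(h))$, and granting the treatment of the middle integral below, the first two integrals together equal $\int_{X_{reg}} P(c(h))$. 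Finally, the orbifold integration formula \eqref{intx} identifies this with the orbifold integral: for a good representative $U$ of $(X,x)$ with local smoothing covering $\pi : Y \to U$, the map $\pi$ is a $\#G_x$-fold covering over $U\cap X_{reg}$ with branch locus of measure zero, so $\int_U P(c(h)) = \tfrac{1}{\#G_x}\int_Y \pi^\ast P(c(h)) = \int_{U\cap X_{reg}} P(c(h))$, and patching gives $\int_{X_{reg}} P(c(h)) = \int_X P(c(h)) = \int_X P(c(\scr{S}))$. Substituting into the displayed splitting and rearranging yields the claimed formula.

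The step I expect to be the main obstacle is making the middle integral $\int_{\widetilde{U}} \sigma^\ast P(c(h))$ rigorous: a priori $\sigma^\ast P(c(h))$ is only defined on $\widetilde{U}\setminus\mathcal{E}$, and one must show it is integrable across $\mathcal{E}$ and that its integral carries the correct normalization. This is exactly where the diagram \eqref{diag1} enters. Pulling $P(c(h))$ back to the local uniformizer gives a genuine smooth $G_x$-invariant form $\widetilde{\omega} = \pi^\ast P(c(h))$ on $Y$; its further pullback $\phi^\ast\widetilde{\omega}$ to $\widetilde{Y}$ is smooth and satisfies $\psi^\ast\bigl(\sigma^\ast P(c(h))\bigr) = \phi^\ast\widetilde{\omega}$. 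Since $\phi$ is bimeromorphic and $\psi$ is generically finite of degree $\deg\psi = \deg\pi = \#G_x$ and unramified off a set of measure zero, a change of variables upstairs gives $\#G_x \int_{\widetilde{U}} \sigma^\ast P(c(h)) = \int_{\widetilde{Y}}\phi^\ast\widetilde{\omega} = \int_Y \widetilde{\omega} = \#G_x \int_U P(c(h))$, which simultaneously proves integrability and pins down the factor. Controlling this pullback uniformly along the normal-crossing divisor $\mathcal{E}$, and verifying the degree and ramification claims for $\phi$ and $\psi$, is the technical heart of the proof.
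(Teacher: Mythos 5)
Your argument is correct, but note that the paper does not prove this proposition at all: it is imported verbatim as Proposition 3.14 of Blache's paper \cite{bla}, so there is no in-paper proof to compare against. What you have written is essentially the standard Chern--Weil localization argument that underlies Blache's proof: choose metrics $h$, $\breve{h}$ agreeing via $\sigma$ off neighborhoods of the exceptional fibres, observe that the discrepancy form is compactly supported there and represents the local classes, and identify $\int_{\widetilde{X}}\sigma^\ast P(c(h))$ with the orbifold integral $\int_X P(c(h))$. Two remarks. First, the one point you gloss over is the meaning of $P(c(x,\breve{\scr{S}}))$ for a nonlinear $P=\sum_\nu a_\nu C_1^{\nu_1}\cdots C_n^{\nu_n}$: the naive difference $P(c(\breve{h}))-\sigma^\ast P(c(h))$ involves cross terms such as $\gamma_i\wedge\sigma^\ast c_j(h)$ (where $c_i(\breve{h})=\sigma^\ast c_i(h)+\gamma_i$), and one must check that this compactly supported representative is the one used to define the local Chern \emph{number} $\int_{\widetilde{X}}P(c(x,\breve{\scr{S}}))$ via the product structure $H^{2i}_{dRc}\times H^{2j}_{dR}\to H^{2(i+j)}_{dRc}$; this is a definitional matter settled in \cite{bla}, not a gap in the logic. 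Second, your ``technical heart'' is simpler than you fear: since the singularities are isolated and $G_x$ is small, the local smoothing covering $\pi:Y\to U$ is an unramified $\#G_x$-fold cover off the single point $y$, so $\int_U\omega=\frac{1}{\#G_x}\int_Y\pi^\ast\omega=\int_{U\setminus\{x\}}\omega$ for any smooth orbifold form, and since $\sigma$ is a biholomorphism of $\widetilde{U}\setminus\mathcal{E}$ onto $U\setminus\{x\}$ with $\mathcal{E}$ of measure zero, the identity $\int_{\widetilde{U}}\sigma^\ast P(c(h))=\int_U P(c(h))$ (including integrability across $\mathcal{E}$) follows at once; the diagram \eqref{diag1} is not needed, though your degree count through $\phi$ and $\psi$ does arrive at the same (correct) normalization.
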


\section{Bott's residue formula on orbifolds}\label{bottorbi}

\subsection{The residue formula}\label{ver1} 

Let $X$ be a complex orbifold of dimension $n$ and $L$ a line 
V-bundle over $X$ and consider the Chern classes
\begin{equation}\label{vir}
 \begin{array}{c}
c_j (TX -L^{\vee}) :=  c_j(X) + c_{j-1}(X) c_1(L) + \cdots + (c_1(L))^j , \quad 1 \leq j \leq n,\\
{}\\
c^\nu(TX-L^{\vee})=c_1^{\nu_1}(TX-L^{\vee})\dots c_n^{\nu_n}(TX-L^{\vee}), \quad \nu=(\nu_1,\dots,\nu_n),\hfill\\
{}\\
n=\nu_1+2\nu_2+\dots+ n\nu_n.\hfill
\end{array}
\end{equation}
Write $C^\nu = C_1^{\nu_1}\dots C_n^{\nu_n}$ for the quasihomogeneous invariant polynomial of degree $n$, where $C_i$ is the i-th elementary symmetric function, and remark that any invariant polynomial $P$ of degree $n$ is a linear combination $P=\sum_\nu a_\nu C^\nu$, $a_\nu \in \C$.

\begin{teo}\label{teo1}
Let $X$ be a compact orbifold of dimension $n$ with only isolated singularities, let $\scr{L}$  be a locally V-free sheaf of rank $1$ over $X$ and $L$ the associated line V-bundle. Suppose ${\xi}$ is a holomorphic section of ${TX\otimes L}$ with isolated zeros. If $P$ is an invariant polynomial of degree $n$, then
$$\int_X P(TX-L^{\vee})=\sum_{p \,|\, \xi(p)=0}\frac{1}{\# G_p}\mbox{Res}_{\,\tilde{p}}\left[\frac{P(J\tilde{\xi} )\,d\tilde{z}_1\wedge\dots\wedge d\tilde{z}_n}{\tilde{\xi}_1\dots\tilde{\xi}_n}\right]$$

\noindent where, for each $p$ such that $\xi(p)=0$, $(\widetilde{U}, \tilde{p}) \umapright{\pi_p} (U,p)$ is a smoothing covering of $X$ at $p$, $\tilde{\xi}= \pi_p^\ast \xi$,  $J{\tilde{\xi}}=\displaystyle\left(\frac{\partial\tilde{\xi}_i}{\partial \tilde{z}_j}\right)_{1\leq i, j\leq n}$ and ${Res}_{\,\tilde{p}}\displaystyle\left[\frac{P(J\tilde{\xi} )\,d\tilde{z}_1\wedge\dots\wedge d\tilde{z}_n}{\tilde{\xi}_1\dots\tilde{\xi}_n}\right]$ is Grothendieck's point residue.

\end{teo}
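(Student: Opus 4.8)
The plan is to follow the architecture of Bott's original argument, transplanted to the orbifold setting: use the section $\xi$ to build an adapted connection, establish a Bott-type vanishing of the Chern form $P(TX-L^\vee)$ away from the zero set, localize the integral by the orbifold Stokes formula (\ref{stokes}), and identify each local contribution with Grothendieck's point residue. The factors $1/\#G_p$ will appear automatically through the orbifold integration formula (\ref{intx}) once the singular zeros are treated on their local smoothing coverings.

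First I would reinterpret the section. The canonical identification $TX\otimes L\cong \mathrm{Hom}(L^\vee,TX)$ turns $\xi$ into a bundle morphism $\phi\colon L^\vee\to TX$ which is a nowhere-vanishing (hence subbundle) inclusion exactly on $X_{reg}\setminus Z$, where $Z=\{p\mid \xi(p)=0\}$ is the (finite) zero set. There $\phi$ realizes $L^\vee$ as a line subbundle $F\subset TX$ with normal quotient $Q=TX/F$ of rank $n-1$; the V-exact sequence $0\to L^\vee\to TX\to Q\to 0$ together with property (iii) of Remark \ref{cclass} gives $c(TX-L^\vee)=c(Q)$ on $X_{reg}\setminus Z$. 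Since a single vector field is always tangent to its orbit foliation, $F$ is an integrable line field off $Z$, so the Bott connection $\nabla^{B}$ on $Q$ exists and its curvature $\Theta^{B}$ annihilates leaf-tangent directions. The crucial vanishing is then Bott's: because $\deg P=n>n-1=\operatorname{rank}Q$, the Chern form $P(\Theta^{B})$ vanishes identically on $X_{reg}\setminus Z$, and the same holds near any singular point not in $Z$ after lifting to its smoothing cover, all objects here being genuine V-forms built from $\xi$ and V-metrics.

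Next I would transgress and localize. Comparing, via Chern--Weil, a form $\alpha_0$ that computes $P(TX-L^\vee)$ from a connection adapted to a metric splitting $TX\cong F\oplus Q$ with the Bott form $P(\Theta^{B})=0$ produces a transgression V-form $\beta$ on $X_{reg}\setminus Z$ with $d\beta=\alpha_0$ there; since $\alpha_0$ differs from any globally defined V-connection representative by an exact V-form, it represents $\int_X P(TX-L^\vee)$. Removing small invariant balls $B_\epsilon(p)$ around the points of $Z$, applying the orbifold Stokes formula (\ref{stokes}), and letting $\epsilon\to 0$ (using that $\int_{B_\epsilon(p)}\alpha_0\to 0$) localizes the integral:
\[
\int_X P(TX-L^\vee)=-\sum_{p\in Z}\lim_{\epsilon\to 0}\int_{\partial B_\epsilon(p)}\beta .
\]

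Finally I would evaluate each boundary term. For a regular zero $p$ (where $G_p$ is trivial) this is the classical Bott local computation, identifying $-\lim_\epsilon\int_{\partial B_\epsilon(p)}\beta$ with the Grothendieck residue. For a singular zero $p$ I lift through $\pi_p\colon(\wti{U},\ti{p})\to(U,p)$: the lift $\ti{\xi}=\pi_p^\ast\xi$ is a $G_p$-invariant holomorphic vector field on the smooth manifold $\wti{U}$, nonvanishing away from $\ti{p}$, and $\nabla^{B}$ and $\beta$ lift equivariantly. Since $\pi_p$ has generic degree $\#G_p$, the integration formula (\ref{intx}) turns the boundary integral downstairs into $\tfrac{1}{\#G_p}\int_{\partial\wti{B}_\epsilon(\ti{p})}\pi_p^\ast\beta$, and the standard smooth local computation identifies this with $\tfrac{1}{\#G_p}\operatorname{Res}_{\ti{p}}\!\big[P(J\ti{\xi})\,d\ti{z}_1\wedge\cdots\wedge d\ti{z}_n/(\ti{\xi}_1\cdots\ti{\xi}_n)\big]$; summing over $Z$ yields the asserted formula. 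The main obstacle I anticipate is the equivariant bookkeeping at the singular zeros: one must verify that the adapted connection, the transgression $\beta$, and the localization are genuine $G_p$-invariant V-objects on $\wti{U}$ and that passage to the cover introduces precisely the factor $1/\#G_p$ dictated by (\ref{intx}); the purely analytic step identifying the boundary transgression integral with Grothendieck's point residue is then the classical smooth-manifold computation (cf.\ \cite{bss}), carried out upstairs on $\wti{U}$.
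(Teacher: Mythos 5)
Your proposal is correct in its overall architecture but reaches the result by a genuinely different route from the paper's. You argue in the Baum--Bott style: view $\xi$ as a morphism $L^{\vee}\to TX$, pass to the normal bundle $Q=TX/L^{\vee}$ of the orbit foliation off the zero set $Z$, and obtain the needed primitive from Bott's vanishing $P(\Theta^{B})=0$, valid because $\deg P=n>\mathrm{rank}\,Q=n-1$; the evaluation of each boundary term as a Grothendieck residue is then imported from the classical computation (cf.\ \cite{bss}). The paper never introduces the foliation or a Bott connection: following Chern \cite{ch}, it builds the global $(n,n)$-form $P(\wti{\Omega})$ directly from V-metrics on $\scr{T}_X$ and $\scr{L}$, exhibits the explicit primitive $\Psi=(-1)^{n+1}P(J\ti{\xi})\,\eta_\alpha\wedge(\overline{\partial}\eta_\alpha)^{n-1}$ off the zeros (after imposing the flat metrics $h^\alpha_{ij}=\delta_{ij}$, $H^\alpha=1$ near each zero and globalizing by a partition of unity), and identifies the sphere integral with the residue by an explicit Bochner--Martinelli computation together with the lemma of \cite{G-H}, p.~651. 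Your route buys conceptual clarity about why the class localizes; the paper's buys a self-contained evaluation of the local contribution, which you leave as a black box. The treatment of singular zeros is the same in both: lift through $\pi_p$ and apply (\ref{intx}) to produce the factor $1/\#G_p$. One step of yours needs tightening: the splitting form $\alpha_0$ is defined only on $X\setminus Z$ and its curvature is of order $|\ti{\xi}|^{-2}$ near a zero, so neither ``$\int_{B_\epsilon(p)}\alpha_0\to 0$'' nor the claim that $\alpha_0$ computes $\int_X P(TX-L^{\vee})$ is automatic; the clean argument compares a genuinely global V-connection representative with the (vanishing) Bott representative on $X\setminus Z$, or else prescribes the connections near $Z$ before globalizing, exactly as the paper does with its choice of metrics, and only then applies Stokes on $X\setminus\bigcup B_\epsilon(p)$.
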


\begin{proof}
Let $p_1, \dots, p_k$ be the zeros of the section $\xi$ and choose points $p_{k+1}, \dots, p_l$ in $X$ such that:
\begin{itemize}
\item[(i)]  For all $1 \leq \alpha \leq l$,  $\pi_\alpha : (\widetilde{U}_\alpha, \tilde{p}_\alpha) \longrightarrow (U_\alpha, p_\alpha)$ is a smoothing covering, $U_\alpha$ is a good representative of $\{p_\alpha\}$ and $\{U_\alpha\}_{\alpha=1, \dots, l}$ is an open cover of $X$.
\item[(ii)] For each zero $p_\alpha$ of $\xi$, $p_\alpha \not\in U_\beta$ for all $\beta \neq \alpha$. 
\item[(iii)] If $p_\alpha \neq p_\beta$ are zeros of $\xi$ then $U_\alpha \cap U_\beta = \emptyset$, $U_\alpha \cap Sing (X)= \{p_\alpha\}$ in case $p_\alpha \in Sing (X)$ and $U_\alpha \cap Sing (X)= \emptyset$ otherwise.
\item[(iv)] If $p_\alpha \in X_{reg}$ then $U_\alpha$ is a trivializing neighborhood for both $TX$ and $L$, whereas if $p_\alpha \in Sing(X)$, $U_\alpha \setminus \{p_\alpha\}$ is a trivializing neighborhood for both $TX$ and $L$.
\end{itemize}

We follow Chern's arguments \cite{ch} for the first part of the proof. The constructions below are all well defined, see \cite{sa} \textsection 2.  

Equip the locally V-free sheaves $\scr{T}_X$ and $\scr{L}$,  the tangent sheaf of $X$ and the sheaf whose associated V-bundle is $L$, respectively, with hermitian V-metrics $h$ and $H$.  

Locally, using $\pi_p:(\C^n, 0)\cong(\wti{U},\ti{p}) \longrightarrow (U, p)$, we have $\breve{h} = (\breve{h}_{ij})$ and $\breve{H}= a_{\wti{U}} >0$. The (unique) connection which preserves the hermitian structure in $T{\wti{U}}$ has matrix
$$
\omega = \partial \breve{h} \cdot \breve{h}^{-1} ,\; \omega =(\omega_{i j}), \; \omega_{i j} =\sum_k \Gamma^j_{i k} d\ti{z}_k.
$$
The curvature matrix is 
$$
K_{\nabla_h} = d\omega - \omega \wedge \omega = \overline{\partial} \omega =(\Omega_{i j})\;\;\;\mathrm{and} \;\;\; \overline{\partial} K_{\nabla_h} =0.
$$
For $\breve{L}_{\wti{U}}$ the curvature form is
$$
\Phi = -\partial\overline{\partial} \log a_{\wti{U}}, \;\;\; a_{\wti{U}} = a_{\wti{U}'} |\breve{H}_{\wti{U} \wti{U}'}|.
$$
We have
$$
\Phi = - \sum_k d\ti{z}_k \wedge \psi_k, \; \; \psi_k = \overline{\partial}\, \dfrac{\partial \log a_{\wti{U}}}{\partial \ti{z}_k}, \;\;\overline{\partial} \psi_k =0,
$$
Consider the matrix of $C^\infty$ $(1,1)$-forms
$$
\wti{\Omega} = (\wti{\Omega}_{i j}), \; \wti{\Omega}_{i j} = \Omega_{i j} - d \ti{z}_j \wedge \psi_i.
$$

In \cite{ch} it is shown that, for any invariant polynomial of degree $n$, whenever $\wti{U}_\alpha \cap \wti{U}_\beta \neq \emptyset$ the matrices $\wti{\Omega}_\alpha$ satisfy
 $P( \wti{\Omega}_\alpha)= P(\wti{\Omega}_\beta)$
and hence they define \emph{a global smooth $(n,n)$-form $P(\wti{\Omega})$ on $X$}. 

Outside the zeros
of the section $\xi$, the form $P(\wti{\Omega})$ satisfies 
\begin{equation}\label{loc1}
 P( \wti{\Omega} ) = d \Psi
\end{equation}
{where} $\Psi$ {is a smooth 
$(n, n-1)$-form on} $X \setminus \{p : \xi(p)=0\}$ and

\begin{equation}\label{loc2}
\left[\left(\dfrac{\sqrt{-1}}{2\pi}\right)^n P(\wti{\Omega})\right]= 
P(TX -L^{\vee}) \in H^{2n}_{dR} (X, \C).
\end{equation}

Write
\begin{equation}\label{qsi}
\ti{\xi}_\alpha = \sum\limits_i \ti{\xi}_i^\alpha \dfrac{\partial}{\partial \ti{z}_i^\alpha}, \qquad \ti{\xi}_i^\alpha \in \Gamma(\wti{U}_\alpha, \breve{L}_{|\wti{U}_\alpha})
\end{equation}
for the local expression of $\ti{\xi}$ over $\wti{U}_\alpha$.

At each $p_\alpha$ which is a zero of $\xi$, adopt the standard hermitian metrics ${h}_{ij}^\alpha = \delta_{ij}$ and ${H}^\alpha =1$ over ${U}_\alpha$ and extend these to $h$ and $H$, in $\scr{T}_X$ and $\scr{L}$ respectively, by means of a partition of unity. Then, with the induced metrics over $\ti{U}_\alpha$,  the $(n,n-1)$-form $\Psi$ above has the explicit expression (equation (55) of \cite{ch}):

\begin{equation}\label{Psi}
\pi^\ast_\alpha\Psi = (-1)^{n+1} P(J\ti{\xi}_\alpha)\; \eta_\alpha \wedge \;(\overline{\partial} \,\eta_\alpha)^{n-1}
\end{equation}
where
$$
\eta_\alpha = \dfrac{\sum\limits_i \overline{\ti{\xi}_i^\alpha} \;d \ti{z}_i^\alpha}{\sum\limits_i \overline{\ti{\xi}_i^\alpha}\; \ti{\xi}_i^\alpha}
$$
is the $(0,1)$-form dual to $\ti{\xi}_\alpha$.

We now investigate $\eta_\alpha \wedge \;(\overline{\partial}\eta_\alpha)^{n-1}$. We have
\begin{equation}\label{Psi1}
\eta_\alpha \wedge (\overline{\partial}\eta_\alpha)^{n-1} \!\!=
\!\dfrac{{\upsilon}_n}{|\ti{\xi}_\alpha|^{2n}} \!\!\left[d\ti{z}_1^\alpha \wedge \dots \wedge d\ti{z}_n^\alpha \!\bigwedge\!
\sum\limits_i (-1)^{i-1}  \overline{\ti{\xi}_i^\alpha} \, d\,\overline{\ti{\xi}_1^\alpha} \wedge \dots \wedge \widehat{d\,\overline{\ti{\xi}_i^\alpha}} \wedge \dots \wedge d\,\overline{\ti{\xi}_n^\alpha} \right]\!
\end{equation}
where ${\upsilon}_n = (-1)^{\frac{n(n-1)}{2}}(n-1)!$.

Recall that the Bochner-Martinelli kernel in $\C^n \times \C^n$ has the representation

$$
\mathsf{K}(w,u) = \frac{(n-1)!}{(2\pi \sqrt{-1})^n |w-u|^{2n}}
\,\sum\limits_i\,(\overline{w_i} - \overline{u_i})\,dw_i \,\wedge
\left[ \bigwedge\limits_{j \neq i} d\,\overline{w_j} \wedge dw_j \right].
$$
A manipulation shows that
$$
\mathsf{K}(w,0)=  (-1)^{\frac{n(n-1)}{2}}\frac{(n-1)!}{(2\pi \sqrt{-1})^n |w|^{2n}} \sum_i \Theta_i(w) \wedge \vartheta (w)
$$
where $\Theta_i(w) ={(-1)}^{i-1}\overline{w_i}\, d\,\overline{w_1}\wedge \cdots \wedge \widehat{d\,\overline{w_i}} \wedge \cdots \wedge d\,\overline{w_n} $ and $\vartheta(w) = dw_1 \wedge \cdots \wedge dw_n$.
Set $\ell_n = {(-1)}^{\frac{n(n-1)}{2}}\,{(n-1)! \over {(2 \pi \sqrt{-1})}^n}$ and put
$$
\mathsf{B} (z, \zeta) = \frac{\ell_n}{|z - \zeta|^2} \, \sum\limits_i \,\Theta_i (z - \zeta)\wedge \vartheta (\zeta).
$$
This is the same as
$$
\mathsf{B}(z, \zeta) =\frac{\ell_n}{|z - \zeta|^2} \, \sum\limits_{i} {(-1)}^{i-1} (\overline{z_i} - \overline{\zeta_i}) \bigwedge
\limits_{j \neq i} (d\overline{z_j} - d\overline{\zeta_j}) \wedge d\zeta_1 \wedge \cdots \wedge d\zeta_n.
$$
Now, it is proven in \cite{G-H}, Lemma in page 651, that
\begin{equation}\label{resdol}
\mbox{Res}_{\,0}\left[\frac{g \,dz_1\wedge\dots\wedge dz_n}{f_1\dots f_n}\right] = \int\limits_{S^{2n-1}_\epsilon} g\, \Phi^\ast \mathsf{B}
\end{equation}
where $g$ and $f=( f_1, \dots, f_n)$  are holomorphic in a neigborhood of $0 \in \C^n$, $f$ has an isolated zero at $0$, $\Phi : (\C^n, 0) \longrightarrow \C^n \times \C^n $  is the map $\Phi(z) = (z+f(z), z)$ and  $S^{2n-1}_\epsilon$ is the euclidean sphere of radius $\epsilon$ centered at $0$, for all sufficiently small $\epsilon>0$. 

By replacing $f$ by $\xi_\alpha$ and considering the map $\Phi : (\wti{U}_\alpha, \ti{p}_\alpha) \longrightarrow \C^n \times \C^n$, $\Phi(\ti{z}_\alpha) = (\ti{z}_\alpha + \ti{\xi}_\alpha(\ti{z}_\alpha), \ti{z}_\alpha)$, a straightforward calculation gives
\begin{equation}
\Phi^\ast \mathsf{B} =\left(\dfrac{1}{2\pi \sqrt{-1}}\right)^n  \eta_\alpha \wedge \;(\overline{\partial}\eta_\alpha)^{n-1}.
\end{equation}
By (\ref{loc2}) it follows that
\begin{equation}\label{end1}
\int\limits_X P(TX - L^\vee) = \int\limits_X \left[ \left( \frac{\sqrt{-1}}{2\pi}\right)^n P(\wti{\Omega})\right].
\end{equation}
For each zero $p_\alpha$ of $\xi_\alpha$ choose a euclidean ball $B_\epsilon(p_\alpha)$ centered at $p_\alpha$ such that $\overline{B_\epsilon(p_\alpha)} \subset U_\alpha$.  By Stokes and (\ref{loc1})
\begin{equation}\label{c1}
\displaystyle \int\limits_{X \setminus \cup_1^k B_\epsilon(p_\alpha)} P \left( \frac{\sqrt{-1}}{2\pi} \wti{\Omega}\right) = 
\int\limits_{X \setminus \cup_1^k B_\epsilon(p_\alpha)} \left(\dfrac{\sqrt{-1}}{2\pi}\right)^n \; d\Psi = 
\sum\limits_1^k \int\limits_{\partial B_\epsilon(p_\alpha)} - \left(\dfrac{\sqrt{-1}}{2\pi}\right)^n \Psi.
\end{equation}
Now,
\begin{equation}\label{c2}
\int\limits_{S^{2n-1}_\epsilon(p_\alpha)} - \left(\dfrac{\sqrt{-1}}{2\pi}\right)^n \Psi = \dfrac{1}{\# G_{p_\alpha}}\int\limits_{S^{2n-1}_\epsilon(\ti{p}_\alpha)} - \left(\dfrac{\sqrt{-1}}{2\pi}\right)^n \pi^\ast_\alpha\Psi 
\end{equation}
\begin{equation}\label{c3}
 = \dfrac{1}{\# G_{p_\alpha}}\int\limits_{S^{2n-1}_\epsilon(\ti{p}_\alpha)} - \left(\dfrac{\sqrt{-1}}{2\pi}\right)^n (-1)^{n+1} P(J\ti{\xi}_\alpha)\; \eta_\alpha \wedge \;(\overline{\partial}\eta_\alpha)^{n-1}
\end{equation}
\begin{equation}\label{c4}
= \dfrac{1}{\# G_{p_\alpha}}\int\limits_{S^{2n-1}_\epsilon(\ti{p}_\alpha)} - \left(\dfrac{\sqrt{-1}}{2\pi}\right)^n (-1)^{n+1} P(J\ti{\xi}_\alpha)\; (2\pi \sqrt{-1})^n \Phi^\ast \mathsf{B} 
\end{equation}
\begin{equation}\label{c5}
= \dfrac{1}{\# G_{p_\alpha}}\int\limits_{S^{2n-1}_\epsilon(\ti{p}_\alpha)} P(J\ti{\xi}_\alpha)\; \Phi^\ast \mathsf{B} = \frac{1}{\# G_{p_\alpha}}\mbox{Res}_{\,\tilde{p}_\alpha}\left[\frac{P(J\tilde{\xi}_\alpha )\,d\tilde{z}_1^\alpha\wedge\dots\wedge d\tilde{z}^\alpha_n}{\tilde{\xi}^\alpha_1\dots\tilde{\xi}^\alpha_n}\right]
\end{equation}
by (\ref{resdol}). Hence
\begin{equation}\label{c6}
\displaystyle \int\limits_{X \setminus \cup_1^k B_\epsilon(p_\alpha)} P \left( \frac{\sqrt{-1}}{2\pi} \wti{\Omega}\right) = \sum_{p_\alpha \,|\, \xi(p_\alpha)=0}\frac{1}{\# G_{p_\alpha}}\mbox{Res}_{\,\tilde{p}_\alpha}\left[\frac{P(J\tilde{\xi}_\alpha )\,d\tilde{z}_1^\alpha\wedge\dots\wedge d\tilde{z}_n^\alpha}{\tilde{\xi}_1^\alpha\dots\tilde{\xi}_n^\alpha}\right].
\end{equation}

Taking $\epsilon \rightarrow 0$ we get, by (\ref{end1}), 
\begin{equation}\label{end2}
\int\limits_X P(TX - L^\vee) = \sum_{p \,|\, \xi(p)=0}
\frac{1}{\# G_p}\mbox{Res}_{\,\tilde{p}}\left[\frac{P(J\tilde{\xi} )\,d\tilde{z}_1\wedge\dots\wedge d\tilde{z}_n}{\tilde{\xi}_1\dots\tilde{\xi}_n}\right]
\end{equation}
as stated.
\end{proof}

\subsection{Another residue formula}\label{ver2} 

In this subsection we bring in resolutions and obtain relations between characteristic numbers of the orbifold and of a good resolution. As in \textbf{\ref{ver1}}, let $X$ be a compact complex orbifold of dimension $n$ with only isolated singularities, $\scr{L}$ a locally V-free sheaf of rank $1$ over $X$ and $L$ the associated line V-bundle. 

Suppose $\xi$ is a holomorphic section of $TX \otimes L$ with isolated zeros. The Poincar\' e-Hopf index of $\xi$ at a zero $p$ was defined by Satake in \textsection 3.2 of \cite{sa} and is: 
\begin{equation}\label{ph1}
I_p (\xi) = \frac{1}{\#G_p} I_{\ti{p}} (\ti{\xi})
\end{equation}
where $\pi_p : (\wti{U}, \ti{p}) \longrightarrow (U, p)$ is a local smoothing covering, $\ti{\xi} = \pi_p^\ast \xi$ and $I_{\ti{p}} (\ti{\xi})$ is the usual Poincar\' e-Hopf index. It's well known that, in terms of point residues, using $\det$ to designate the $n$-th elementary symmetric polynomial $C_n$,
\begin{equation}\label{ph2}
I_{\ti{p}} (\ti{\xi}) = \mbox{Res}_{\,\tilde{p}}\left[\frac{\det(J\tilde{\xi} )\,d\tilde{z}_1\wedge\dots\wedge d\tilde{z}_n}{\tilde{\xi}_1\dots\tilde{\xi}_n}\right].
\end{equation}

Recall from (\ref{vir}) and Remark \ref{cclass} (iv) that
\begin{equation}\label{virtop}
c_n(TX \otimes L) = c_n (TX - L^\vee) =  c_n(X) + c_{n-1}(X) c_1(L) + \cdots + (c_1(L))^n.
\end{equation}

\begin{teo}\label{teo2}
Let $X$ be a compact complex orbifold of dimension $n$ with only isolated singularities, $\scr{L}$ a locally V-free sheaf of rank $1$ over $X$ and $L$ the associated line V-bundle. Let $\sigma : \wti{X} \longrightarrow X$ be a good resolution of $X$, $\mathcal{ E}$ the exceptional divisor, and let $\xi$ be a holomorphic section of $TX \otimes L$ with isolated zeros. Assume $(\sigma^\ast \scr{L})^{\vee \vee}$ is locally free. Then
\begin{equation}\label{teo2a}
\sum\limits_{p\,|\, \xi(p)=0} I_p(\xi)= \int\limits_{\wti{X}} c_n \left((\sigma^\ast(TX \otimes L))^{\vee \vee}\right) - \sum\limits_{p \in Sing(X)} \int\limits_{\wti{X}} c_n \left(p, (\sigma^\ast(TX \otimes L))^{\vee \vee}\right).
\end{equation}
In particular, if $\sigma^\ast(\scr{T}_X \otimes \scr{L})$ is reflexive, then
\begin{equation}\label{teo2b}
\sum\limits_{p \in Sing(X) \cap \{ \xi(p)=0\}} I_p(\xi)= \sum\limits_{\ti{p} \in \mathcal{E}\,|\, \sigma^\ast{\xi}(\ti{p})=0} I_{\ti{p}}(\sigma^\ast\xi) - \sum\limits_{p \in Sing(X)} \int\limits_{\wti{X}} c_n \left(p, \sigma^\ast(TX \otimes L)\right).
\end{equation}
\end{teo}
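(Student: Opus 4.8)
The plan is to deduce Theorem \ref{teo2} from the global/local Chern number comparison of Proposition \ref{locglob} applied to the degree-$n$ invariant polynomial $P = C_n$ (the top Chern class), combined with the interpretation of Poincar\'e--Hopf indices as Chern numbers. First I would observe that the section $\xi$ of $TX \otimes L$ is, away from its isolated zeros, a nowhere-vanishing section of the locally V-free sheaf $\scr{T}_X \otimes \scr{L}$ of rank $n$; since the top Chern class $c_n$ of a rank-$n$ sheaf is precisely the obstruction to the existence of a global nonvanishing section, the sum of local indices $\sum_{p\,|\,\xi(p)=0} I_p(\xi)$ equals the Chern number $\int_X c_n(\scr{T}_X \otimes \scr{L})^{\vee\vee}$. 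This is the orbifold analogue of the classical Poincar\'e--Hopf statement, and it is consistent with Satake's definition \eqref{ph1} of $I_p(\xi)$ together with the point-residue formula \eqref{ph2}; indeed, taking $P = C_n$ in Theorem \ref{teo1} and using \eqref{ph1}--\eqref{ph2} already gives $\int_X c_n(TX - L^\vee) = \sum_{p\,|\,\xi(p)=0} I_p(\xi)$.

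Next I would feed the locally free sheaf $\breve{\scr{S}} := (\sigma^\ast(\scr{T}_X \otimes \scr{L}))^{\vee\vee}$ on $\wti{X}$ into Proposition \ref{locglob}. The hypothesis that $(\sigma^\ast\scr{L})^{\vee\vee}$ is locally free, together with the fact that $\scr{T}_X \otimes \scr{L}$ is locally V-free, should guarantee that $\scr{S} = (\sigma_\ast \breve{\scr{S}})^{\vee\vee}$ recovers $\scr{T}_X \otimes \scr{L}$ on $X$, so that Proposition \ref{locglob} with $P = C_n$ yields
\begin{equation*}
\int_X c_n(\scr{T}_X \otimes \scr{L}) = \int_{\wti{X}} c_n\big((\sigma^\ast(TX \otimes L))^{\vee\vee}\big) - \sum_{p \in Sing(X)} \int_{\wti{X}} c_n\big(p, (\sigma^\ast(TX \otimes L))^{\vee\vee}\big).
\end{equation*}
Combining this with the Poincar\'e--Hopf identification of the left-hand side gives \eqref{teo2a} directly.

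For the second, ``in particular'' statement I would split the index sum over all zeros into those lying in $Sing(X)$ and those in $X_{reg}$. The regular zeros contribute ordinary indices that match, via the resolution $\sigma$ (which is biholomorphic over $X_{reg}$), the indices $I_{\ti{p}}(\sigma^\ast\xi)$ at the corresponding zeros on $\wti{X} \setminus \mathcal{E}$. Under the reflexivity hypothesis $\sigma^\ast(\scr{T}_X \otimes \scr{L}) = (\sigma^\ast(\scr{T}_X \otimes \scr{L}))^{\vee\vee}$, the pulled-back section $\sigma^\ast\xi$ is a genuine section of this locally free sheaf with isolated zeros, so applying Poincar\'e--Hopf on the compact complex manifold $\wti{X}$ identifies $\int_{\wti{X}} c_n(\sigma^\ast(TX\otimes L))$ with the total index sum $\sum_{\ti{p}\,|\,\sigma^\ast\xi(\ti{p})=0} I_{\ti{p}}(\sigma^\ast\xi)$. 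Subtracting the contribution of the regular zeros (which cancels against the regular part of the left side of \eqref{teo2a}) and reorganizing then isolates the singular-point indices on the left and the exceptional-divisor indices on the right, producing \eqref{teo2b}.

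The main obstacle I anticipate is the bookkeeping around reflexivity and the double-dual operations: one must verify that $(\sigma_\ast\breve{\scr{S}})^{\vee\vee}$ genuinely equals $\scr{T}_X \otimes \scr{L}$ so that Proposition \ref{locglob} applies, and that under the reflexivity hypothesis the zeros of $\sigma^\ast\xi$ on $\mathcal{E}$ are isolated with well-defined indices, so that classical Poincar\'e--Hopf on $\wti{X}$ is legitimate. Care is also needed to ensure that the regular-zero indices match exactly on both sides and cancel cleanly, since $\sigma$ is only biholomorphic away from $\mathcal{E}$; the terms over $Sing(X)$ and over $\mathcal{E}$ must be matched using $\sigma(\mathcal{E}) = Sing(X)$ from Definition \ref{goodres}. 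The identification of the top Chern number with the index sum is, by contrast, standard and should require only the orbifold integration formula \eqref{intx} and Satake's definition \eqref{ph1}.
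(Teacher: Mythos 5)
Your proposal is correct and follows essentially the same route as the paper: identify $\sum_p I_p(\xi)$ with $\int_X c_n(TX-L^\vee)$ via Theorem \ref{teo1} with $P=C_n$ together with \eqref{ph1}--\eqref{ph2}, apply Proposition \ref{locglob} to the locally free sheaf $(\sigma^\ast(\scr{T}_X\otimes\scr{L}))^{\vee\vee}$ to get \eqref{teo2a}, and then derive \eqref{teo2b} by classical Poincar\'e--Hopf on $\wti{X}$ and cancellation of the regular zeros using $\sigma(\mathcal{E})=Sing(X)$ and $I_p(\xi)=I_{\ti{p}}(\sigma^\ast\xi)$ off the singular set. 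The bookkeeping points you flag (that $(\sigma_\ast\breve{\scr{S}})^{\vee\vee}$ recovers $\scr{T}_X\otimes\scr{L}$, and isolatedness of the zeros of $\sigma^\ast\xi$ on $\mathcal{E}$) are real but are also left implicit in the paper, which simply cites Lemma 3.7 of Blache for the local freeness of $(\sigma^\ast\scr{L})^{\vee\vee}$.
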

\begin{proof}
First we remark that since $\scr{L}$ is locally V-free, we may construct a resolution such that $(\sigma^\ast \scr{L})^{\vee \vee}$ is locally free (Lemma 3.7 of \cite{bla}). By Proposition  \ref{locglob}, Theorem \ref{teo1}, (\ref{ph1}) and (\ref{ph2}) we have
(\ref{teo2a}). (\ref{teo2b}) follows from (\ref{teo2a}) since $\sigma({\mathcal{E}}) = Sing(X)$ and $I_p(\xi) =I_{\ti{p}}(\sigma^\ast\xi)$ for all $p \not\in Sing(X)$. 
\end{proof}

\section{Applications}\label{appl}

\subsection{One-dimensional foliations on $\wP^n_w$}

Here we consider weighted complex projective spaces with only isolated singularities, which we briefly recall.

Let $w_{0},...,w_{n}$ be positive integers two by two co-primes, set $w:=(w_{0},...,w_{n})$ and $|w|:= w_0+ \dots + w_n$. Define an action of $\C^\ast$ in $\C^{n+1} \setminus \{0\}$ by

\begin{equation}
\begin{array}{ccc}
\C^\ast \times \C^{n+1} \setminus \{0\} & \longrightarrow & \C^{n+1} \setminus \{0\} \\
\lambda . (z_0, \dots, z_n) & \longmapsto &  (\lambda^{w_0} z_0, \dots, \lambda^{w_n} z_n)\\
\end{array}
\end{equation}
and let $\wP_w^n := \C^{n+1} \setminus \{0\} / \sim$. The weights are chosen to be 2 by 2 co-primes in order to assure a finite number of singularities and to give $\wP_w^n$ the structure of an effective, abelian, compact orbifold of dimension $n$.  The singular locus is:
\begin{equation}\label{spnw}
Sing (\wP_w^n) = \{\mathsf{e}_i = [0: \dots:\underbrace{1}_i : \dots : 0]_w \,: i=0,1, \dots, n\}.
\end{equation} 

We have the canonical projection  
\begin{equation}
\begin{array}{ccc}
\pi: {\C}^{n+1} \setminus \{0\} & \longrightarrow& \wP_w^n \hfill \\
(z_0, \dots, z_n) & \longmapsto & [z_0^{w_0}: \dots : z_n^{w_n}]_w
\end{array}
\end{equation}
and the natural map
\begin{equation}
\begin{array}{ccc}
\hskip 32pt \varphi_w: \wP^n & \longrightarrow& \wP_w^n \hfill \\
{[z_0: \dots : z_n]}  & \longmapsto &  [z_0^{w_0}: \dots : z_n^{w_n} ]_w
\end{array}
\end{equation}
of degree $\deg \varphi_w= w_0 \dots w_n$. The map $\varphi_w$ is \emph{good} in the sense of \cite{cr}  \textsection{4.4}, which means, among other things, that V-bundles behave well under pullback. 
It is shown in \cite{mann} that there is a line V-bundle $\scr{O}_{\wP^n_w} (1)$ on $\wP^n_w$, unique up to isomorphism, such that
\begin{equation}
\varphi_w^\ast \scr{O}_{\wP_w^n} (1) \cong  \scr{O}_{\wP^n} (1)
\end{equation}
and, by naturality, $c_1(\varphi_w^\ast \scr{O}_{\wP_w^n} (1)) = c_1 ( \scr{O}_{\wP^n} (1))= \varphi_w^\ast c_1(\scr{O}_{\wP_w^n} (1))$,  from which we obtain the Chern number
\begin{equation}\label{c1^n}
[\wP_w^n] \frown \left( c_1( \scr{O}_{\wP_w^n} (1))\right)^n = \int\limits_{\wP_w^n} \left( c_1( \scr{O}_{\wP_w^n} (1))\right)^n 
= \dfrac{1}{w_0 \dots w_n}
\end{equation}
since
\begin{equation}
1 = \int\limits_{\wP^n} \left(c_1(\scr{O}_{\wP^n} (1))\right)^n = \int\limits_{\wP^n} \varphi_w^\ast\left( c_1( \scr{O}_{\wP_w^n} (1))\right)^n = (\deg \varphi_w) \int\limits_{\wP_w^n} 
\left( c_1( \scr{O}_{\wP_w^n} (1))\right)^n.
\end{equation}
As usual we set $\scr{O}_{\wP^n_w}(k) := \scr{O}_{\wP^n_w}(1)^{\otimes k}$ for $k \in \Z$. 

The Euler sequence on $\wP^n_w$ reads

\begin{equation}\label{euler}
0\longrightarrow \underline{\mathbb{C}} \longrightarrow \bigoplus_{i=0}^{n} \scr{O}_{\wP_w^n}(w_i)\longrightarrow T\mathbb{P}_{w}^{n}\longrightarrow 0
\end{equation}
where
\begin{itemize}
  \item[(i)] $1\longmapsto ({w_0}{z_0},...,{w_n}{z_n})$.
  \item[(ii)] $(P_{0},...,P_{n})\longmapsto \pi_{\ast}\left(\sum_{i=0}^{n}P_{i}\frac{\partial}{\partial z_{i}}\right)$.
\end{itemize}

Twisting by $\scr{O}_{\wP^n_w}(d-1)$ we get
\begin{equation}\label{eulertwis}
0 \longrightarrow \scr{O}_{\wP^n_w}(d-1) \longrightarrow \bigoplus_{i=0}^{n} \scr{O}_{\wP_w^n}(w_i +d-1)\longrightarrow T\mathbb{P}_{w}^{n} \otimes \scr{O}_{\wP^n_w}(d-1) \longrightarrow 0.
\end{equation}
\begin{lem} \label{citaa}
If $d>1-\max\limits_{i \neq j} \{w_{i}+w_{j}\}$, then 
$H^{0}(\mathbb{P}_{w}^{n},T\mathbb{P}_{w}^{n}\otimes\mathcal{O}_{w}(d-1))$ $\neq 0$.
\end{lem}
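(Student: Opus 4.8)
The plan is to extract the desired section from the long exact cohomology sequence of the twisted Euler sequence (\ref{eulertwis}). Writing $\scr{O}=\scr{O}_{\wP^n_w}$ and taking all cohomology on $\wP^n_w$, the left-exact functor $H^0$ applied to (\ref{eulertwis}) yields
\[
0 \to H^0(\scr{O}(d-1)) \xrightarrow{\ \alpha\ } \bigoplus_{i=0}^{n} H^0(\scr{O}(w_i+d-1)) \xrightarrow{\ \beta\ } H^0\big(T\wP^n_w\otimes\scr{O}(d-1)\big).
\]
By exactness at the middle term, $\operatorname{im}\beta \cong \operatorname{coker}\alpha$. Hence it suffices to prove that $\alpha$ is \emph{not} surjective: a nonzero cokernel already forces the target of $\beta$ to be nonzero, so this route needs no control of $H^1$.

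Next I would make $\alpha$ fully explicit. Identifying $H^0(\scr{O}(k))$ with the weight-$k$ graded piece of $\C[z_0,\dots,z_n]$, where $z_i$ has weight $w_i$, description (i) of the Euler sequence (\ref{euler}) shows that $\alpha$ is the map $R \longmapsto (w_0 z_0 R,\dots,w_n z_n R)$. Thus every tuple in $\operatorname{im}\alpha$ has its $i$-th entry divisible by $z_i$. To exhibit a class outside $\operatorname{im}\alpha$ it therefore suffices to produce, for a single index $i$, a weighted-homogeneous monomial of degree $w_i+d-1$ not involving $z_i$, i.e.\ a monomial of that degree in the variables $\{z_j\}_{j\neq i}$. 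Placing it in the $i$-th slot and $0$ elsewhere gives a tuple whose image under $\beta$ is the sought nonzero section.

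The natural candidates come from the two indices $a\neq b$ realizing $\max_{i\neq j}\{w_i+w_j\}=w_a+w_b$, because the hypothesis $d>1-\max_{i\neq j}\{w_i+w_j\}$ says exactly that $d-1+w_a+w_b>0$. I would then look for a monomial of degree $d-1+w_a$ in $\{z_j\}_{j\neq a}$ (which contain $z_b$), or symmetrically of degree $d-1+w_b$ in $\{z_j\}_{j\neq b}$, so that the positivity of $d-1+w_a+w_b$ guarantees enough room in at least one of these slots.

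The main obstacle I anticipate is precisely this last, purely number-theoretic step: showing that under $d-1+w_a+w_b>0$ one of the degrees attached to a slot is a nonnegative-integer combination of the remaining weights. This is where positivity and the pairwise coprimality of the $w_i$ must enter, since for two coprime weights a Bezout / Sylvester--Frobenius type argument controls exactly which degrees are representable. I expect a short case analysis according to the sign of $d-1+w_a$: the range $d-1+w_a\ge 0$ can be treated directly, while the narrow remaining range must be filled using the second large weight $w_b$ together with coprimality. I regard the representability argument as the delicate heart of the proof, and the rest as formal.
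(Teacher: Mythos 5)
Your route is genuinely different from the paper's: the paper identifies $T\wP^n_w\cong\Omega^{n-1}_{\wP^n_w}(|w|)$ via the pairing $\Omega^{n-1}\times\Omega^{1}\to\Omega^{n}=K_{\wP^n_w}=\scr{O}_{\wP^n_w}(-|w|)$ and then quotes Dolgachev's Corollary 2.3.4 for the nonvanishing of $H^{0}(\Omega^{n-1}_{\wP^n_w}(|w|+d-1))$, whereas you work directly with the twisted Euler sequence and aim to show $\operatorname{coker}\alpha\neq 0$. The formal part of your argument is correct and can even be simplified: since $\operatorname{im}\alpha$ consists of tuples $(w_0z_0R,\dots,w_nz_nR)$ with a \emph{common} $R$, any tuple with exactly one nonzero entry already lies outside $\operatorname{im}\alpha$ (the vanishing of a second slot forces $R=0$), so you do not need your monomial to avoid $z_i$; it suffices that $H^{0}(\scr{O}_{\wP^n_w}(w_i+d-1))\neq 0$ for a single $i$.

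However, the step you yourself flag as ``the delicate heart'' is a genuine gap, and under the stated hypothesis it cannot be closed. The inequality $d-1+w_a+w_b>0$ does not force any of the integers $d-1+w_i$ to lie in the numerical semigroup generated by $w_0,\dots,w_n$. Take $w=(1,2,3)$ and $d=-3$: then $d>1-(2+3)=-4$, but $d-1+w_i\in\{-3,-2,-1\}$ for all $i$, so every $H^{0}(\scr{O}_{\wP^2_w}(w_i+d-1))$ vanishes, the middle term of your sequence is zero, and since $H^{1}(\scr{O}_{\wP^n_w}(m))=0$ for $n\geq 2$ one gets $H^{0}(T\wP^2_w\otimes\scr{O}_{\wP^2_w}(-4))=0$ (equivalently $H^{0}(\Omega^{1}_{\wP(1,2,3)}(2))=0$ by a direct check with the exact sequence $0\to\Omega^{1}(2)\to\bigoplus_i\scr{O}(2-w_i)\to\scr{O}(2)\to 0$). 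So the obstruction you anticipate is real: what your argument needs --- and what is in fact equivalent to the conclusion --- is that some $d-1+w_i$ be a nonnegative integral combination of the weights, and the lemma's purely numerical bound does not guarantee this in general. Your argument does close cleanly in the cases the paper actually uses: on $\wP(1,1,k)$ the hypothesis gives $d-1+k\geq 0$, and the monomial $z_0^{\,d-1+k}$ placed in the slot of $z_2$ produces the required section.
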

\begin{proof}
\noindent The pairing $\Omega^{r}_{\mathbb{P}_{w}^{n}}\times \Omega^{n-r}_{\mathbb{P}_{w}^{n}}\longrightarrow \Omega^{n}_{\mathbb{P}_{w}^{n}}$
induces $\Omega^{r}_{\mathbb{P}_{w}^{n}} \cong \mathrm{Hom}_{\mathbb{C}}\left(\Omega^{n-r}_{\mathbb{P}_{w}^{n}}, \Omega^{n}_{\mathbb{P}_{w}^{n}}\right)$.
On the other hand, $K_{\mathbb{P}^{n}_{w}}=\scr{O}_{\wP^n_w}\left(-|w|\right)$ and hence $T\mathbb{P}^{n}_{w}=
(\Omega^{1}_{\mathbb{P}^{n}_{w}})^{\vee}=
\Omega^{n-1}_{\mathbb{P}^{n}_{w}}\otimes
(\Omega^{n}_{\mathbb{P}^{n}_{w}})^{\vee}=
\Omega^{n-1}_{\mathbb{P}^{n}_{w}}(|w|)$. Since $H^{0}(\mathbb{P}_{w}^{n},T\mathbb{P}_{w}^{n}\otimes
\scr{O}_{\wP^n_w}(d-1))\cong
H^{0}(\mathbb{P}_{w}^{n}, 
\Omega^{n-1}_{\mathbb{P}_{w}^{n}}(|w|+d-1))$, the result follows from Corollary 2.3.4 of \cite{dolga}.
\end{proof}

Hence, if $\msf{X}$ is a quasi-homogeneous vector field of type $w$ and degree $d$ in $\C^{n+1}$, as in Lemma \ref{citaa}, that is, writing $\msf{X} = \sum\limits_{i=0}^n P_i (z) \frac{\partial}{\partial z_i}$ we have $P_i (\lambda^{w_0}z_0, \dots, \lambda^{w_n}z_n) = \lambda^{d + w_i - 1}P_i (z_0, \dots, z_n)$, then $\sf{X}$ defines a one-dimensional holomorphic foliation $\F$ on $\wP^n_w$ and, due to (\ref{eulertwis}), $g\msf{R}_w + \msf{X}$ defines the same foliation, where ${\msf{R}}_w = \sum_0^n w_i z_i  \frac{\partial}{\partial z_i}$ is the weighted radial vector field and $g$ is a quasi-homogeneous polynomial of type $w$ and degree $d$. The integer $d$ is, by definition, the \emph{degree} of the foliation $\F$.

Dually, as $H^{0}(\mathbb{P}_{w}^{n},T\mathbb{P}_{w}^{n}\otimes
\scr{O}_{\wP^n_w}(d-1))\cong
H^{0}(\mathbb{P}_{w}^{n}, 
\Omega^{n-1}_{\mathbb{P}_{w}^{n}}(|w|+d-1))$ we have that a one-dimensional holomorphic foliation on $\wP^n_w$ is induced by a quasi-homogeneous $(n-1)$-form $\eta =  \sum\limits_0^n A_i dz_1 \wedge \dots \wedge\widehat{d z_i}\wedge \dots \wedge dz_n$, where $A_i$ is quasi-homoegenous of degree $d +|w| -w_i -1$, subjected to the condition $\sum\limits_0^n w_i z_i A_i \equiv 0$.

We now give an enumerative result concerning foliations on $\wP^n_w$ and examine some consequences of it.

\begin{prop}\label{enum1}
Let $\xi$ be a holomorphic section of $T{\wP}_{w}^{n}\otimes\scr{O}_{\wP^n_w}(d-1)$ with only isolated zeros. Then
\begin{equation}\label{enum2}
 \sum_{p | \xi(p)=0}\mathcal{I}_{p}(\xi)=\frac{1}{w_{0} \dots w_{n}}\sum_{j=0}^{n} C_j(w) (d-1)^{n-j},
\end{equation}
where $C_j(w)$ is the $j$-th elementary symmetric function of the variables $w_0, \dots, w_n$.
\end{prop}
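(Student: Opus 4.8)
The plan is to specialize Theorem \ref{teo1} to the determinant polynomial and then to evaluate the resulting characteristic number on $\wP^n_w$ explicitly by means of the Euler sequence. Here $X = \wP^n_w$ and $L = \scr{O}_{\wP^n_w}(d-1)$, so that $\xi$ is a section of $TX \otimes L$, and I would apply Theorem \ref{teo1} with $P = C_n = \det$, the $n$-th elementary symmetric function. For this choice the Grothendieck point residue on the right-hand side is precisely the usual Poincar\'e--Hopf index by (\ref{ph2}), so each summand $\frac{1}{\#G_p}\mathrm{Res}_{\,\ti{p}}\!\left[\det(J\ti{\xi})\,d\ti{z}_1\wedge\cdots\wedge d\ti{z}_n/(\ti{\xi}_1\cdots\ti{\xi}_n)\right]$ equals $\frac{1}{\#G_p}I_{\ti{p}}(\ti{\xi}) = I_p(\xi) = \mathcal{I}_p(\xi)$ by (\ref{ph1}). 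Thus the right-hand side of Theorem \ref{teo1} collapses to $\sum_{p\,|\,\xi(p)=0}\mathcal{I}_p(\xi)$, the left-hand side of (\ref{enum2}), and it remains only to compute the orbifold Chern number $\int_{\wP^n_w} C_n(TX - L^\vee) = \int_{\wP^n_w} c_n(TX - L^\vee)$.

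To evaluate this I would read off $c(T\wP^n_w)$ from the Euler sequence (\ref{euler}). Since the subsheaf $\underline{\C}$ is trivial, the Whitney formula for V-exact sequences (Remark \ref{cclass}(iii)) gives $c(T\wP^n_w) = \prod_{i=0}^n(1 + w_i h)$ with $h = c_1(\scr{O}_{\wP^n_w}(1))$; by Remark \ref{cclass}(i) the top-degree term is killed and $c_m(\wP^n_w) = C_m(w)\,h^m$ for $0 \le m \le n$, where $C_m(w)$ is the $m$-th elementary symmetric function of $w_0,\dots,w_n$. Inserting this together with $c_1(L) = (d-1)h$ into the expansion $c_n(TX - L^\vee) = \sum_{k=0}^n c_{n-k}(\wP^n_w)\,c_1(L)^k$ of (\ref{virtop}) gives, after reindexing $j = n-k$,
$$
c_n(TX - L^\vee) = \Big(\sum_{j=0}^n C_j(w)\,(d-1)^{n-j}\Big)\, h^n .
$$

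Finally I would integrate, using the Chern number $\int_{\wP^n_w} h^n = 1/(w_0\cdots w_n)$ obtained for $\wP^n_w$ via the degree of $\varphi_w$, to get $\int_{\wP^n_w} c_n(TX - L^\vee) = \frac{1}{w_0\cdots w_n}\sum_{j=0}^n C_j(w)(d-1)^{n-j}$; combined with the first paragraph this is exactly (\ref{enum2}). I do not expect a genuine obstacle: the argument is a direct specialization of Theorem \ref{teo1} followed by a routine Chern-class computation. The only points demanding care are the bookkeeping identifying $P = C_n$ with the determinant and hence with the Poincar\'e--Hopf index, and the verification that the virtual class $c_n(TX - L^\vee)$ really coincides with $c_n(TX \otimes L)$ so that the direct computation is consistent with the twisted Euler sequence (\ref{eulertwis}); both are already recorded in (\ref{virtop}) and Remark \ref{cclass}(iv).
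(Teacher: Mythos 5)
Your proposal is correct and follows essentially the same route as the paper: specialize Theorem \ref{teo1} to $P = C_n$, identify the residues with Poincar\'e--Hopf indices via (\ref{ph1}) and (\ref{ph2}), and evaluate $\int_{\wP^n_w} c_n(T\wP^n_w \otimes \scr{O}_{\wP^n_w}(d-1))$ using the Euler sequence. The only cosmetic difference is that you compute the Chern number directly on $\wP^n_w$ using the normalization $\int_{\wP^n_w} h^n = 1/(w_0\cdots w_n)$, whereas the paper pulls the whole class back to $\wP^n$ via $\varphi_w$ and divides by $\deg\varphi_w$ at the end; these are the same calculation.
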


\begin{proof}
By Theorem \ref{teo1}, (\ref{ph1}), (\ref{ph2}) and (\ref{virtop}) we have that
\begin{equation}\label{coeff1}
\sum_{p | \xi(p)=0}\mathcal{I}_{p}(\xi)= \int_{\wP^n_w} c_n(T{\wP}_{w}^{n}\otimes\scr{O}_{\wP^n_w}(d-1)).
\end{equation}
To calculate this Chern number we use the map $\varphi_w$. By (\ref{virtop}),
\begin{equation}\label{coeff2}
\begin{array}{c}
c_n(T{\wP}_{w}^{n}\otimes\scr{O}_{\wP^n_w}(d-1))= \hfill \\
{}\\
= c_n(T{\wP}_{w}^{n}) + c_{n-1}(T{\wP}_{w}^{n}) c_1(\scr{O}_{\wP^n_w}(d-1)) + \cdots + (c_1(\scr{O}_{\wP^n_w}(d-1)))^n.
\end{array}
\end{equation}
By (\ref{euler}) and Remark \ref{cclass} $(iii)$,
\begin{equation}\label{euler1}
c(T{\wP}_{w}^{n}) = c \left( \bigoplus_{i=0}^{n} \scr{O}_{\wP_w^n}(w_i)\right).
\end{equation}
By naturality
\begin{equation}
\varphi^\ast_w \left( c \left( \bigoplus_{i=0}^{n} \scr{O}_{\wP_w^n}(w_i)\right) \right) = c \left( \bigoplus_{i=0}^{n} \varphi^\ast \scr{O}_{\wP_w^n}(w_i)\right) = c \left( \bigoplus_{i=0}^{n} \scr{O}_{\pn}(w_i)\right)
\end{equation}
and
\begin{equation}\label{coeff3}
c \left( \bigoplus_{i=0}^{n} \scr{O}_{\pn}(w_i)\right) = \prod\limits_{i=0}^n \left( 1 + w_i \mathsf{h} \right) = 1+  \sum\limits_{i=1}^n C_i(w) \mathsf{h}^i
\end{equation}
where $\mathsf{h}$ is the hyperplane class in $\pn$. Hence, from (\ref{coeff2}) we get
\begin{equation}
\begin{array}{c}
\varphi^\ast_w c_n(T{\wP}_{w}^{n}\otimes\scr{O}_{\wP^n_w}(d-1))= \hfill  \\
{}\\
=\left[ C_n(w) + C_{n-1}(w) (d-1) + \dots +
C_1(w) (d-1)^{n-1} + (d-1)^n \right] \mathsf{h}^n.
\end{array}
\end{equation}
Integration gives:
\begin{equation}
\begin{array}{ccc}
\displaystyle \sum_{i=0}^n C_i(w) (d-1)^{n-i} &=& \displaystyle\int\limits_{\pn} \varphi_w^\ast c_n(T{\wP}_{w}^{n}\otimes\scr{O}_{\wP^n_w}(d-1))\\
&=&( \deg \varphi_w)\displaystyle \int\limits_{\wP_w^n} c_n(T{\wP}_{w}^{n}\otimes\scr{O}_{\wP^n_w}(d-1))
\end{array}
\end{equation}
and the result follows.
\end{proof}

\begin{ddef}\label{nondeg}
Let $\xi$ be a holomorphic section of $T{\wP}_{w}^{n}\otimes\scr{O}_{\wP^n_w}(d-1)$ and let $p$ be an isolated zero of $\xi$. $p$ is non-degenerate if, given a local smoothing covering $\pi : (\widetilde{U}, \tilde{p}) \longrightarrow (U, p)$, the lifted vector field $\tilde{\xi}= \pi^\ast \xi$ has a non-degenerate singularity at $\tilde{p}$, that is, $\tilde{I}_{\tilde{p}}(\tilde{\xi}) =1$.
\end{ddef}
Given $\mathsf{e}_j \in Sing(\wP_w^n)$ (recall (\ref{spnw})), the local fundamental group $G_{\mathsf{e}_j}$ of $\wP^n_w$ at $\mathsf{e}_j$ is isomorphic to $\boldsymbol{\zeta}_{w_j}$, the group of the $w_j$-th roots of unity. As an immediate consequence of Proposition \ref{enum1} we have

\begin{cor}\label{enum3}
Let $\xi$ be a holomorphic section of $T{\wP}_{w}^{n}\otimes\scr{O}_{\wP^n_w}(d-1)$ with only isolated and non-degenerate zeros. Then
\begin{equation}\label{numnondeg}
\sum\limits_{p | \xi(p)=0} \frac{w_0 \dots w_n}{\#G_p} = \sum_{j=0}^{n} C_j(w) (d-1)^{n-j}. 
\end{equation}
If $\F$ is the foliation induced by $\xi$ and $Sing(\F) \cap Sing(\wP^n_w) = \emptyset$, then (\ref{numnondeg}) reads
\begin{equation}\label{numnondeg1}
(w_0 \dots w_n) \,  \# Sing(\F) = \sum_{j=0}^{n} 
C_j(w) (d-1)^{n-j}.
\end{equation}
\end{cor}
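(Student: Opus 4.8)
The plan is to read both identities directly off Proposition \ref{enum1} by evaluating the Poincar\'e--Hopf indices in the non-degenerate case. First I would invoke the defining formula (\ref{ph1}), namely $\mathcal{I}_p(\xi) = \frac{1}{\#G_p}\, I_{\ti{p}}(\ti{\xi})$, for the orbifold index at each zero $p$ of $\xi$. By Definition \ref{nondeg}, non-degeneracy of $p$ means exactly that the lifted field $\ti{\xi} = \pi^\ast \xi$ satisfies $I_{\ti{p}}(\ti{\xi}) = 1$, so $\mathcal{I}_p(\xi) = \frac{1}{\#G_p}$ at every zero. Substituting this into (\ref{enum2}) and clearing the denominator $w_0 \cdots w_n$ produces (\ref{numnondeg}) at once. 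Note that this first identity is already general: it counts zeros whether they sit at singular or at regular points of $\wP^n_w$, weighting each by $1/\#G_p$.

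For the second identity I would specialize to the hypothesis $Sing(\F) \cap Sing(\wP^n_w) = \emptyset$. Since $Sing(\F)$ is precisely the zero locus of $\xi$, this forces every zero $p$ to lie in the regular part $(\wP^n_w)_{reg}$, where the local fundamental group $G_p$ is trivial and hence $\#G_p = 1$. Each summand on the left of (\ref{numnondeg}) then equals $w_0 \cdots w_n$, and summing over the $\#Sing(\F)$ distinct zeros gives (\ref{numnondeg1}) immediately.

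There is no genuine technical obstacle here --- the corollary is immediate once Proposition \ref{enum1} is available --- so the only point demanding care is the bookkeeping: one must align the non-degeneracy condition of Definition \ref{nondeg} (index $1$ on the smoothing cover) with Satake's averaging (\ref{ph1}) to fix the per-point contribution as $1/\#G_p$, and then observe that multiplication by $w_0 \cdots w_n$ is exactly what converts the rational orbifold count on the left into the integer-valued polynomial in $(d-1)$ on the right.
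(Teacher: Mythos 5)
Your proof is correct and matches the paper's intent exactly: the paper states this corollary as an immediate consequence of Proposition \ref{enum1} with no written proof, and the route you take --- combining (\ref{ph1}) with the normalization $I_{\ti{p}}(\ti{\xi})=1$ from Definition \ref{nondeg} to get $\mathcal{I}_p(\xi)=1/\#G_p$, then clearing denominators, and finally setting $\#G_p=1$ when all zeros avoid $Sing(\wP^n_w)$ --- is precisely the intended argument.
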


\begin{cor}\label{enum4}
Let $\xi$ be a holomorphic section of $T{\wP}_{w}^{n}\otimes\scr{O}_{\wP^n_w}(d-1)$ with only isolated zeros and denote by $\F$ the induced foliation. Then,
\begin{itemize}
\item[(i)] $Sing(\F) \neq \emptyset$ whenever $d \geq 1$ and, for $n=2$, whenever $d \geq 0$.
\item[(ii)] $Sing(\F) \neq \emptyset$ in case $d-1\, \nmid \, C_n(w)$.
\end{itemize}
\end{cor}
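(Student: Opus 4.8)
The plan is to argue by contradiction, reducing both assertions to the non-vanishing of the arithmetic expression on the right-hand side of Proposition \ref{enum1}. Setting $P(t) := \sum_{j=0}^n C_j(w)\,t^{n-j}$, Proposition \ref{enum1} reads $\sum_{p\,|\,\xi(p)=0}\mathcal{I}_p(\xi) = \frac{1}{w_0\cdots w_n}\,P(d-1)$. Since $Sing(\F)$ is precisely the zero set of $\xi$, if $Sing(\F)=\emptyset$ then the left-hand sum is empty and hence $P(d-1)=0$. So in each case it will be enough to show that the hypothesis forces $P(d-1)\neq 0$.

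For (i), I would first note that the positivity of the weights makes every $C_j(w)$ strictly positive, with $C_0(w)=1$ and $C_n(w)=w_0\cdots w_n>0$. When $d\geq 1$ we have $d-1\geq 0$, so each summand $C_j(w)(d-1)^{n-j}$ is $\geq 0$ and the $j=n$ term equals $C_n(w)>0$; thus $P(d-1)>0$. For the sharper $n=2$ statement I would set $t=d-1\geq -1$ and treat $P(t)=t^2+C_1(w)\,t+C_2(w)$ as a quadratic in $t$. Its vertex lies at $t=-C_1(w)/2\leq -3/2<-1$, since $C_1(w)=w_0+w_1+w_2\geq 3$, so $P$ is strictly increasing on $[-1,\infty)$ and it suffices to check $P(-1)>0$. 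The crucial step here is the rewriting $P(-1)=1-C_1(w)+C_2(w)=w_0(w_1+w_2-1)+(w_1-1)(w_2-1)$, in which the first summand is $\geq w_0>0$ and the second is $\geq 0$; hence $P(-1)>0$ and therefore $P(t)>0$ for every $t\geq -1$, that is, for every $d\geq 0$.

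For (ii), the key is that $d$ is an integer, so $t=d-1\in\Z$ and I can isolate the constant term: $P(d-1)=(d-1)\,Q+C_n(w)$ with $Q=\sum_{j=0}^{n-1}C_j(w)(d-1)^{n-1-j}\in\Z$. Then $P(d-1)=0$ would force $(d-1)\mid C_n(w)$. Taking contrapositives, if $(d-1)\nmid C_n(w)$ then $P(d-1)\neq 0$, so by the opening reduction $Sing(\F)\neq\emptyset$; this is consistent with the boundary value $d=1$, where $d-1=0\nmid C_n(w)$ and indeed $P(0)=C_n(w)\neq 0$.

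I expect the only genuine obstacle to be the $n=2$, $d\geq 0$ refinement in (i): the remaining cases are immediate sign or divisibility checks once Proposition \ref{enum1} is available, whereas this one needs both the location of the vertex strictly to the left of $t=-1$ and the positivity obtained from the factorization of $P(-1)$.
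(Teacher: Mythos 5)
Your proof is correct and follows essentially the same route as the paper: both arguments reduce the corollary to the non-vanishing of the right-hand side of Proposition \ref{enum1}. You simply supply details the paper leaves implicit (the explicit positivity check for $n=2$, $d\geq 0$, which the paper dismisses as ``straightforward'', and the extraction of the divisibility $(d-1)\mid C_n(w)$), and you handle the boundary case $d=1$ directly rather than invoking part (i) to force $d\leq 0$ as the paper does.
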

\begin{proof}
(i) holds since the right hand side of (\ref{enum2}) is positive for $d \geq 1$. The case $n=2$ is also straightforward. As for (ii), recall Lemma \ref{citaa} and notice that, if $Sing(\F) = \emptyset$ then (\ref{enum2}) becomes $\sum_{j=0}^{n-1} C_j(w) (d-1)^{n-j} = - C_n (w)$ and, by $(i)$, necessarily $d \leq 0$.
\end{proof}

\begin{obs}
\rm{Let $\F$ be a foliation of degree $d \neq 1$ on $\wP^2_w$, $w=(w_0, w_1, w_2)$, with isolated singularities only. If all points in $Sing(\F)$ are non-degenerate singularities, then $Sing(\F) \neq \{\mathsf{e}_0, \mathsf{e}_1, \mathsf{e}_2\}$. To see this assume $Sing(\F) = \{\mathsf{e}_0, \mathsf{e}_1, \mathsf{e}_2\}$. Then (\ref{enum2}) reads  $\dfrac{C_2(w)}{w_0 w_1 w_2}=
\dfrac{(d-1)^2+(d-1)C_{1}(w)+C_{2}(w)}{w_{0}w_{1}w_{2}}$. Since  $d \neq 1$ we are left with $d=1-C_{1}(w)$. But this contradicts Lemma \ref{citaa} since $d>1-\max\limits_{i \neq j} \{w_{i}+w_{j}\}>
1-C_{1}(w)$.}
\end{obs}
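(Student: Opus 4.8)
The plan is to argue by contradiction, converting the non-degenerate enumerative identity of Corollary \ref{enum3} into a single equation for the degree $d$ and then colliding that equation with the lower bound on $d$ imposed by Lemma \ref{citaa}. So I would start by assuming the opposite of the conclusion, namely $Sing(\F) = \{\mathsf{e}_0, \mathsf{e}_1, \mathsf{e}_2\}$; since $\F$ has only isolated singularities, the inducing section $\xi$ of $T\wP^2_w \otimes \scr{O}_{\wP^2_w}(d-1)$ has only isolated zeros, so Corollary \ref{enum3} applies.

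The first substantive step is to evaluate the left-hand side of (\ref{numnondeg}) under this hypothesis. Recall that $\#G_{\mathsf{e}_j} = w_j$ and that non-degeneracy of the zero at $\mathsf{e}_j$ means the lifted index $I_{\tilde{p}}(\tilde{\xi})$ equals $1$, so by (\ref{ph1}) the point $\mathsf{e}_j$ contributes the summand $w_0 w_1 w_2 / \#G_{\mathsf{e}_j} = w_0 w_1 w_2 / w_j$. Summing over the three singular points gives
$$\sum_{p \,|\, \xi(p)=0} \frac{w_0 w_1 w_2}{\#G_p} = \frac{w_0 w_1 w_2}{w_0} + \frac{w_0 w_1 w_2}{w_1} + \frac{w_0 w_1 w_2}{w_2} = C_2(w).$$
On the other hand, the right-hand side of (\ref{numnondeg}) for $n=2$ is $(d-1)^2 + C_1(w)(d-1) + C_2(w)$. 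Equating the two, the terms $C_2(w)$ cancel and I am left with $(d-1)\big((d-1)+C_1(w)\big)=0$; since by hypothesis $d \neq 1$, I may divide by $d-1$ to obtain $d = 1 - C_1(w)$.

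The final step produces the contradiction. As $\F$ is a genuine foliation of degree $d$, its defining quasi-homogeneous vector field is of the type appearing in Lemma \ref{citaa}, so $d > 1 - \max_{i \neq j}\{w_i + w_j\}$. Here positivity of the weights is decisive: the largest pairwise sum necessarily omits one strictly positive weight, whence $\max_{i \neq j}\{w_i + w_j\} < w_0 + w_1 + w_2 = C_1(w)$, and therefore $1 - \max_{i \neq j}\{w_i + w_j\} > 1 - C_1(w)$. Chaining these inequalities yields $d > 1 - C_1(w)$, which is incompatible with $d = 1 - C_1(w)$, and the contradiction proves the claim.

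I expect the only delicate points to be bookkeeping rather than conceptual. The one place where care is genuinely needed is the evaluation of the left-hand side: non-degeneracy, fed through (\ref{ph1}), is exactly what forces each contribution to be the precise value $w_0 w_1 w_2 / w_j$ rather than merely a lower bound, and it is this exactness that makes the cancellation of $C_2(w)$ possible. The second point worth flagging is that the contradiction does not come from the enumerative formula alone but from the degree constraint intrinsic to the very notion of a foliation (Lemma \ref{citaa}), together with the strict inequality $\max_{i\neq j}\{w_i+w_j\} < C_1(w)$ guaranteed by the positivity of the weights.
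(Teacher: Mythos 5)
Your proposal is correct and follows essentially the same route as the paper: assume $Sing(\F)=\{\mathsf{e}_0,\mathsf{e}_1,\mathsf{e}_2\}$, use non-degeneracy to evaluate each index as $1/w_j$ (you phrase this via Corollary \ref{enum3}, the paper via (\ref{enum2}) directly, which is the same identity up to clearing the denominator $w_0w_1w_2$), cancel $C_2(w)$ to get $d=1-C_1(w)$ from $d\neq 1$, and contradict the degree bound $d>1-\max_{i\neq j}\{w_i+w_j\}>1-C_1(w)$ from Lemma \ref{citaa}. No substantive differences.
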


For a complex surface $\ti{X}$ and $\tilde{\F}$ a holomorphic foliation on $\ti{X}$ with isolated singularities, the Baum-Bott index of $\tilde{\F}$ at $\ti{p} \in Sing(\ti{\F})$ is defined by
$$
B\!B_{\ti{p}}(\ti{\F}) = \mbox{Res}_{\ti{p}} \left[\frac{\left(\mathrm{tr}(J\tilde{\xi})\right)^2} {\tilde{\xi}_1\,\tilde{\xi}_2} d\tilde{z}_1 \wedge d\tilde{z}_2\right],
$$
where $\ti{\xi} = \tilde{\xi}_1 \dfrac{\partial}{\partial \ti{z}_1} + \tilde{\xi}_2 \dfrac{\partial}{\partial \ti{z}_2}$ is a vector field defining $\ti{\F}$ in a neighborhood of $\ti{p}$. As was done for the Poincar\'e-Hopf index, we define the Baum-Bott index of a foliation $\F$ on $\wP^2_w$ at an isolated singularity $p$ by
\begin{equation}\label{bbdef}
B\!B_{p}({\F}) =\frac{1}{\#G_p} \mbox{Res}_{\ti{p}} \left[\frac{\left(\mathrm{tr}(J\tilde{\xi})\right)^2} {\tilde{\xi}_1\,\tilde{\xi}_2} d\tilde{z}_1 \wedge d\tilde{z}_2\right]
\end{equation}
where $\ti{\xi} = \pi^\ast_p \xi$, for $\xi \in H^0(T\wP^2_w \otimes \scr{O}_{\wP^2_w}(d-1))$ defining  $\F$ and $\pi_p: (\wti{U}, \ti{p})\longrightarrow (U, p)$ a local smoothing covering, with $U$ a good representative of $\{p\}$. 
 
\begin{prop}\label{bb} Let $\xi$ be a holomorphic section of  $T\mathbb{P}_{w}^{2}\otimes \scr{O}_{{\wP^2_w}}(d-1)$ with isolated zeros only. Then
\begin{equation}\label{sumbb}
\sum_{p | \xi(p)=0} B\!B_{p}({\F}) =\frac{1}{w_{0}w_{1}w_{2}}(d +|w| -1)^2.
\end{equation}
\end{prop}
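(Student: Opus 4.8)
The plan is to read off the left-hand side of the residue formula in Theorem \ref{teo1} for a judicious choice of invariant polynomial, and then to evaluate the resulting characteristic number on $\wP^2_w$ by means of the Euler sequence and the normalization (\ref{c1^n}). Here $n=2$ and the relevant twisting sheaf is $\scr{L} = \scr{O}_{\wP^2_w}(d-1)$, so that $\xi$ is a holomorphic section of $T\wP^2_w \otimes L$ with $L$ the associated line V-bundle.

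First I would observe that the numerator $(\mathrm{tr}(J\tilde\xi))^2$ in the definition (\ref{bbdef}) of $B\!B_{p}(\F)$ is precisely $P(J\tilde\xi)$ for the invariant polynomial $P = C_1^2$, where $C_1$ is the trace, i.e. the first elementary symmetric function; this $P$ is quasihomogeneous of degree $2 = n$, so Theorem \ref{teo1} applies to it verbatim. Evaluating that theorem at $P = C_1^2$ yields
\[
\sum_{p \,|\, \xi(p)=0} B\!B_{p}(\F) = \sum_{p \,|\, \xi(p)=0} \frac{1}{\#G_p}\,\mathrm{Res}_{\tilde p}\left[\frac{(\mathrm{tr}(J\tilde\xi))^2\, d\tilde z_1 \wedge d\tilde z_2}{\tilde\xi_1 \tilde\xi_2}\right] = \int_{\wP^2_w} C_1^2(T\wP^2_w - L^\vee),
\]
which reduces the proposition to computing this Chern number.

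To evaluate it, I would use (\ref{vir}) with $j=1$, giving $C_1(T\wP^2_w - L^\vee) = c_1(\wP^2_w) + c_1(L)$. The Euler sequence (\ref{euler}) together with Remark \ref{cclass}(iii) yields $c(T\wP^2_w) = \prod_{i=0}^2 (1 + w_i\, c_1(\scr{O}_{\wP^2_w}(1)))$, whence $c_1(\wP^2_w) = |w|\, c_1(\scr{O}_{\wP^2_w}(1))$; since also $c_1(L) = (d-1)\, c_1(\scr{O}_{\wP^2_w}(1))$, I obtain $C_1(T\wP^2_w - L^\vee) = (|w| + d - 1)\, c_1(\scr{O}_{\wP^2_w}(1))$. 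Squaring and integrating, and invoking (\ref{c1^n}) in the form $\int_{\wP^2_w} c_1(\scr{O}_{\wP^2_w}(1))^2 = 1/(w_0 w_1 w_2)$, produces $\frac{1}{w_0 w_1 w_2}(d + |w| - 1)^2$, as asserted.

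I do not expect a genuine obstacle: the argument is a direct specialization of Theorem \ref{teo1} followed by a short characteristic-class computation on $\wP^2_w$. The only step demanding care is the first one, namely verifying that $(\mathrm{tr}(J\tilde\xi))^2$ is the evaluation of the degree-$n$ invariant polynomial $C_1^2$ and that this is compatible with the virtual class $C_1^2(T\wP^2_w - L^\vee)$ defined through (\ref{vir}), so that the residue formula transcribes exactly into the Baum--Bott setting.
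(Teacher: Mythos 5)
Your proposal is correct and follows essentially the same route as the paper's proof: specialize Theorem \ref{teo1} to the invariant polynomial $C_1^2$, then compute $\int_{\wP^2_w} c_1^2(T\wP^2_w - L^\vee)$ via the Euler sequence and the normalization $\int_{\wP^2_w} c_1(\scr{O}_{\wP^2_w}(1))^2 = 1/(w_0w_1w_2)$. The only cosmetic difference is that the paper re-derives this last integral by pulling back along $\varphi_w$ rather than citing (\ref{c1^n}) directly.
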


\begin{proof}

\noindent Since $\mathrm{tr}$ denotes the invariant symmetric polynomial $C_1$,  Theorem \ref{teo1} gives 
\begin{equation}\label{bb1}
\sum_{p | \xi(p)=0} B\!B_{p}({\F}) = \int_{\mathbb{P}^{2}_{w}} c^2_{1}(T\mathbb{P}^{2}_{w}
-\scr{O}_{\wP^2_w}(1-d))
\end{equation}
By (\ref{vir}) we have
\begin{eqnarray*}
\int_{\mathbb{P}^2_w}c_{1}^{2}(T\mathbb{P}_{w}^{2} - \scr{O}_{\wP^2_w}(1-d))
&=& \int_{\mathbb{P}^2_w}\big(c_{1}(T\mathbb{P}_{w}^{2})+c_{1}(\scr{O}_{\wP^2_w}(d-1))\big)^2\\
\mathrm{by \; (\ref{euler1})\quad} &=& \int_{\mathbb{P}^2_w}\left(|w| c_1 (\scr{O}_{\wP^2_w}(1))+(d-1)c_{1}(\scr{O}_{\wP^2_w}(1))\right)^2\\
&=& \frac{1}{\deg \varphi_w}\int_{\mathbb{P}^2}\varphi^\ast_w\big((d +|w| -1)^2 (c_{1}(\scr{O}_{\wP^2}(1)))^2\big)\\
&=& \frac{1}{w_{0}w_{1}w_{2}} (d + |w| -1)^2 \int_{\mathbb{P}^2} (c_{1}(\scr{O}_{\wP^2}(1)))^2\\
&=& \frac{1}{w_{0}w_{1}w_{2}} (d + |w| -1)^2.
\end{eqnarray*}
\end{proof}

Let $\F$ be a foliation on $\wP^2_w$ and suppose $p$ is an isolated singularity of $\F$. We say that $p$ is \emph{radial} provided the lifting $\ti{\xi} = \pi^\ast_p \xi$, of a vector field defining $\F$ around $p$ has radial linear part at $\ti{p}$, that is, $\ti{\xi} = \ti{z}_1\dfrac{\partial}{\partial \ti{z}_1} + \ti{z}_2\dfrac{\partial}{\partial \ti{z}_2} + \cdots$. Notice that, for a radial singularity, since $J(\ti{\xi})= I$ we have $\det J(\ti{\xi}) = 1$ and $\mathrm{tr}  J(\ti{\xi})=2$. It follows that 
\begin{equation}\label{ibb}
I_{{p}}({\xi}) = \dfrac{1}{\#G_p}\;\; \mathrm{and} \;\; B\!B_{{p}}({\F})=\dfrac{4}{\#G_p}.
\end{equation}

\begin{cor}\label{bbrad}
Let $\F$ be a holomorphic foliation of degree $d$ on $\wP^2_w$ with isolated singularities only.
If all the singularities of $\F$ are radial, then

\begin{equation}\label{ibbrad} 
d=\frac{1}{3}\left(3-C_1(w)\pm 2\sqrt{C_1(w)^{2}-3C_2(w)}\,\right).
\end{equation}
In particular, on $\wP(1,1,k)$ with $k \geq 1$, we have $d=\frac{k-1}{3}$ or $d=1-k$.

\end{cor}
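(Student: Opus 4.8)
The plan is to combine the two global identities already established---Proposition~\ref{enum1} for the total Poincar\'e-Hopf index and Proposition~\ref{bb} for the total Baum-Bott index---by exploiting the fact that at a radial singularity these two local invariants are proportional. This turns two formulas into a single equation whose only unknown is the degree $d$.

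First I would recall from (\ref{ibb}) that at each radial singularity $p$ one has $I_p(\xi)=1/\#G_p$ and $B\!B_p(\F)=4/\#G_p$, so that $B\!B_p(\F)=4\,I_p(\xi)$ pointwise. Since by hypothesis \emph{every} singularity of $\F$ is radial, summing over $Sing(\F)$ produces the global relation
$$
\sum_{p\,|\,\xi(p)=0} B\!B_p(\F) = 4\sum_{p\,|\,\xi(p)=0} I_p(\xi).
$$
I would then substitute the closed forms. Writing $c_1=C_1(w)$, $c_2=C_2(w)$ and recalling $|w|=c_1$, Proposition~\ref{bb} gives the left-hand side as $\frac{1}{w_0w_1w_2}(d+c_1-1)^2$, while Proposition~\ref{enum1} with $n=2$ (and $C_0(w)=1$) gives $\sum I_p(\xi)=\frac{1}{w_0w_1w_2}\big[(d-1)^2+c_1(d-1)+c_2\big]$. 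Cancelling the common factor $1/(w_0w_1w_2)$ yields
$$
(d+c_1-1)^2 = 4\big[(d-1)^2+c_1(d-1)+c_2\big].
$$

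Next I would solve this quadratic in $d$. Setting $t=d-1$ and expanding reduces it to $3t^2+2c_1t+(4c_2-c_1^2)=0$, whose roots are $t=\frac{1}{3}\big(-c_1\pm 2\sqrt{c_1^2-3c_2}\big)$; substituting $d=t+1$ gives the stated formula (\ref{ibbrad}). Finally, for $w=(1,1,k)$ I would compute $C_1(w)=k+2$ and $C_2(w)=1+2k$, so that the discriminant $C_1(w)^2-3C_2(w)=(k-1)^2$ is a perfect square with $\sqrt{(k-1)^2}=k-1$ for $k\geq 1$; feeding this into (\ref{ibbrad}) produces $d=\frac{k-1}{3}$ from the $+$ sign and $d=1-k$ from the $-$ sign.

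The argument is essentially algebraic once the pointwise identity $B\!B_p(\F)=4\,I_p(\xi)$ is in hand, so there is no serious obstacle; the only points requiring care are the bookkeeping in expanding and equating the two quadratics, and the observation that for $\wP(1,1,k)$ the discriminant collapses to a perfect square, which is precisely what forces the two admissible degrees to be the simple values claimed.
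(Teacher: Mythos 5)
Your proposal is correct and follows exactly the paper's argument: equate the total Baum--Bott index from Proposition \ref{bb} with four times the total Poincar\'e--Hopf index from Proposition \ref{enum1}, using the pointwise relation (\ref{ibb}) at radial singularities, and then solve the resulting quadratic in $d$. The paper's proof is just a terser version of the same computation, and your explicit verification of the discriminant $(k-1)^2$ in the case $w=(1,1,k)$ is a correct elaboration of what the paper leaves implicit.
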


\begin{proof} By Propositions \ref{enum1}, \ref{bb} and by (\ref{ibb}):
\begin{eqnarray*}
\frac{1}{w_{0}w_{1}w_{2}}(d+|w|-1)^2
&=& \sum_{p | \xi(p)=0} B\!B_{{p}}({\F})\\
&=& 4\, \sum_{p | \xi(p)=0} \mathcal{I}_{p}(\xi)\\
&=& 4\, \frac{(d-1)^2+ C_{1}(w) (d-1) +C_{2}(w)}{w_{0} w_{1} w_{2}}\\
\end{eqnarray*}
Noticing that $|w|=C_1(w)$ (\ref{ibbrad}) follows. 
\end{proof}

Let's comment on the case $\wP(1,1,k)$ in the above corollary. If $k=1$ then $d=0$ and, up to automorphism, we have only the foliation on $\wP^2$ given by $\omega= z_1 dz_0 - z_0 dz_1$, which is a pencil. For $k>1$, up to automorphism, we also have only one foliation of degree $1-k$, analogously the pencil defined by $\omega= z_1 dz_0 - z_0 dz_1$. Hence, if $k>1$ and $3 \nmid (k-1)$ then radial foliations are pencils.

\subsection{Hirzebruch surfaces as good resolutions of $\wP(1,1,k)$}

The $k$-th Hirzebruch surface $\Sigma_k$, $k \geq 0$, is defined as $\wP(\scr{O}_{\wP^1} \oplus \scr{O}_{\wP^1}(k))$. These are $\wP^1$ bundles over $\wP^1$, hence rational ruled surfaces and they have just one ruling, except for $k=0$ since $\Sigma_0 = \wP^1 \times \wP^1$. J. Matsuzawa \cite{ichi} gave the following convenient realization of $\Sigma_k$:
\begin{equation}\label{H1}
\Sigma_k = \{([z_0:z_1:z_2], [s:t]) \in \wP^2 \times \wP^1 : s^k z_0 = t^k z_1\}.
\end{equation}
The projection onto the second factor $\mathrm{pr_2}: \Sigma_n \longrightarrow \wP^1$ gives the ruling and, for $k \geq 1$,
\begin{equation}\label{H2}
D=\{([z_0:z_1:z_2], [s:t]) \in \Sigma_k : z_0 = z_1 =0\}
\end{equation}
is the unique section with $D\cdot D =-k$. Also, $\mathrm{Pic}(\Sigma_k) = \Z D \oplus \Z L$ where $L$ is a fiber, 
$D \cdot L = 1$ and $L \cdot L =0$.

Consider the orbifold $\wP(1,1,k)$, $k \geq 1$, which we denote by $\wP^2_k$ and recall that, for $k>1$, $Sing(\wP^2_k) = \{\mathsf{e}_2 = [0: 0: 1]_k \}$. Let
\begin{equation}\label{res1}
\mathbb{V}_k = \{([z_0:z_1:z_2]_k, [s:t])\in \mathbb{P}^{2}_k \times \mathbb{P}^{1} \,:\, s z_{0}= t z_{1}\}.
\end{equation}
In view of (\ref{H1}), the map
\begin{equation}\label{res2}
\begin{array}{ccc}
\wP^2 \times \wP^1 &\umapright{\displaystyle\Phi}& \wP^2_k \times \wP^1 \\
([z_0:z_1:z_2], [s:t]) & \longmapsto & ([z_0^{1/k}:z_1^{1/k}:z_2]_k, [s:t])
\end{array}
\end{equation}
satisfies
\begin{equation}
\Phi_{|\Sigma_k \setminus D} : \Sigma_k \setminus D \longrightarrow \mathbb{V}_k \setminus \{\mathsf{e}_2\}
\end{equation}
is an isomorphism, in the sense of Definition \ref{v-m}.
The composite morphism 
\begin{equation}
\sigma = \pi_1 \circ \Phi_{|\Sigma_k} : 
\Sigma_k \longrightarrow \wP^2_k
\end{equation}
where $\pi_1 : \wP^2_k \times \wP^1 \longrightarrow \wP^2_k$ is the projection onto the first factor, gives a good resolution of $\wP^2_k$ with exceptional divisor $D$.
\begin{prop}\label{Hres}
Let $\xi \in H^0(\mathbb{P}^2_k, T\mathbb{P}^2_k\otimes \scr{O}_{\wP^2_k}(d-1) )$ be a section with isolated zeros only. Write $\sigma^\ast \xi = \tilde{\xi}$. Then
\begin{equation}\label{Hloc}
\sum\limits_{\tilde{p}\, \in D | \tilde{\xi}=0} {I}_{\tilde{p}}(\tilde{\xi})
- {I}_{\mathsf{e}_2}(\xi)= (d^2+kd+k)\left(k-\frac{1}{k}\right).
\end{equation}
\end{prop}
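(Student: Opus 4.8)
The plan is to apply Theorem \ref{teo2}, specialized to the surface case $n=2$ with the good resolution $\sigma:\Sigma_k\longrightarrow\wP^2_k$ constructed above, and then to evaluate each term explicitly using the intersection theory on $\Sigma_k$ recorded in equations (\ref{H1})--(\ref{H2}). Since $\wP^2_k=\wP(1,1,k)$ has its only singularity at $\mathsf{e}_2$ (for $k>1$), formula (\ref{teo2b}) collapses to a single summand on the right and a single index $I_{\mathsf{e}_2}(\xi)$ on the left. First I would verify the hypothesis of (\ref{teo2b}), namely that $\sigma^\ast(\scr{T}_{\wP^2_k}\otimes\scr{L})$ is reflexive for $\scr{L}=\scr{O}_{\wP^2_k}(d-1)$; here one uses that $\sigma$ is a good resolution and invokes Lemma 3.7 of \cite{bla} together with the explicit smoothness of $\Sigma_k\setminus D$ over $\wP^2_k\setminus\{\mathsf{e}_2\}$. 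Granting this, (\ref{teo2b}) reads
$$
I_{\mathsf{e}_2}(\xi)=\sum_{\ti p\in D\,|\,\sigma^\ast\xi(\ti p)=0} I_{\ti p}(\sigma^\ast\xi)-\int_{\Sigma_k} c_2\big(\mathsf{e}_2,\sigma^\ast(T\wP^2_k\otimes L)\big),
$$
so that the left side of (\ref{Hloc}) equals the local Chern number $\int_{\Sigma_k} c_2(\mathsf{e}_2,\sigma^\ast(T\wP^2_k\otimes L))$.

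The core of the argument is therefore to compute this local Chern number. I would use Proposition \ref{locglob} in reverse: it expresses the global Chern number on $\wP^2_k$ as the difference between the global Chern number on $\Sigma_k$ and the sum of local Chern numbers over $Sing(\wP^2_k)$. With only one singular point, this gives
$$
\int_{\Sigma_k} c_2(\mathsf{e}_2,\breve{\scr S})=\int_{\Sigma_k} c_2(\breve{\scr S})-\int_{\wP^2_k} c_2(\scr S),
$$
where $\scr S=\scr{T}_{\wP^2_k}\otimes\scr{O}_{\wP^2_k}(d-1)$ and $\breve{\scr S}=(\sigma^\ast\scr S)^{\vee\vee}$. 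The second integral is already known: by Proposition \ref{enum1} (applied with $n=2$) it equals $\frac{1}{k}\big((d-1)^2+C_1(w)(d-1)+C_2(w)\big)$ with $w=(1,1,k)$, so $C_1(w)=k+2$ and $C_2(w)=2k+1$. The first integral is a genuine computation on the smooth surface $\Sigma_k$, where I would expand $c_2(\breve{\scr S})=c_2(T\Sigma_k)+c_1(T\Sigma_k)c_1(\breve L)+c_1(\breve L)^2$ as in (\ref{virtop}), use $c_2(T\Sigma_k)=4$ (Euler characteristic of a rational ruled surface), $c_1(T\Sigma_k)=2D+(k+2)L$, and determine $c_1(\breve L)=c_1((\sigma^\ast\scr{O}_{\wP^2_k}(d-1))^{\vee\vee})$ as a class in $\mathrm{Pic}(\Sigma_k)=\Z D\oplus\Z L$. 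All products are then read off from $D\cdot D=-k$, $D\cdot L=1$, $L\cdot L=0$.

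The main obstacle I anticipate is the correct identification of the pulled-back line bundle class $c_1(\breve L)$ on $\Sigma_k$. The subtlety is that $\sigma$ is only an isomorphism away from $D$, and the reflexive pullback $(\sigma^\ast\scr{O}_{\wP^2_k}(d-1))^{\vee\vee}$ may differ from the naive total transform by a multiple of the exceptional divisor $D$; pinning down this multiple requires tracking how $\scr{O}_{\wP^2_k}(1)$ restricts under the factorization $\sigma=\pi_1\circ\Phi_{|\Sigma_k}$ of (\ref{res2}), using that $\varphi_w^\ast\scr{O}_{\wP^2_k}(1)\cong\scr{O}_{\wP^2}(1)$ and that $\Phi$ has degree related to $k$ along the first two coordinates. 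Once $c_1(\breve L)$ is written correctly in the basis $\{D,L\}$, the remaining intersection computation is routine, and collecting the two integrals should yield exactly $(d^2+kd+k)\big(k-\tfrac1k\big)$. I would double-check the final expression by the limiting consistency at $k=1$, where $\wP^2_k$ is smooth, $\Sigma_1$ is the blow-up of $\wP^2$ at a point, and the local Chern number must reduce to a value compatible with $I_{\mathsf{e}_2}(\xi)$ being an ordinary index.
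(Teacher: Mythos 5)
Your overall architecture matches the paper's: apply (\ref{teo2b}) of Theorem \ref{teo2} to reduce (\ref{Hloc}) to the local Chern number $\int_{\Sigma_k} c_2(\mathsf{e}_2,\cdot)$, express that local Chern number as the difference between the global Chern number on $\Sigma_k$ and the one on $\wP^2_k$ (the latter being supplied by Proposition \ref{enum1}), and finish by intersection theory in $\mathrm{Pic}(\Sigma_k)=\Z D\oplus\Z L$. You also correctly flag the subtle point of identifying $c_1\bigl((\sigma^\ast\scr{O}_{\wP^2_k}(1))^{\vee\vee}\bigr)$ in the basis $\{D,L\}$; the paper resolves it via the divisor $S=\{z_0=0\}$ and finds $c_1(\sigma^\ast\scr{O}_{\wP^2_k}(1))=kL+D$, with $(kL+D)^2=k$.

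However, there is a genuine error in your key computation. The sheaf whose second Chern class must be integrated over $\Sigma_k$ is $\breve{\scr{S}}=(\sigma^\ast(T\wP^2_k\otimes\scr{O}_{\wP^2_k}(d-1)))^{\vee\vee}$, i.e.\ the reflexive pullback of the \emph{orbifold} tangent sheaf twisted by $\breve L$ --- not $T\Sigma_k\otimes\breve L$. You propose to expand $c_2(\breve{\scr{S}})$ using $c_2(T\Sigma_k)=4$ and $c_1(T\Sigma_k)=2D+(k+2)L$, but $(\sigma^\ast\scr{T}_{\wP^2_k})^{\vee\vee}$ is not $\scr{T}_{\Sigma_k}$: the natural map between them degenerates along $D$. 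The correct classes, obtained by pulling back the Euler sequence (\ref{euler}), are $c_1\bigl((\sigma^\ast T\wP^2_k)^{\vee\vee}\bigr)=(k+2)(kL+D)$ and $c_2\bigl((\sigma^\ast T\wP^2_k)^{\vee\vee}\bigr)=(2k+1)(kL+D)^2=(2k+1)k$, which differ from $2D+(k+2)L$ and $4$ for every $k\geq 1$. Carrying out your expansion with $c_1(\breve L)=(d-1)(kL+D)$ gives $\int_{\Sigma_k}c_2 = kd^2-kd+2d+2$ instead of the needed $(d^2+kd+k)k$, so the final formula $(d^2+kd+k)\bigl(k-\tfrac1k\bigr)$ would not emerge. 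The fix is to compute all Chern classes of $\breve{\scr{S}}$ from the pulled-back Euler sequence, so that everything is a polynomial in the single class $kL+D$, and only then integrate.
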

\begin{proof}
From (\ref{teo2b}) of Theorem \ref{teo2} we have
\begin{equation}\label{Hloc1}
I_{\mathsf{e}_2}(\xi)= \sum\limits_{\tilde{p}\, \in D | \tilde{\xi}=0} {I}_{\tilde{p}}(\tilde{\xi}) - 
\int\limits_{\Sigma_k} c_2 \left(\mathsf{e}_2, (\sigma^\ast(T\wP^2_k \otimes \scr{O}_{\wP^2_k}(d-1)))\right)
\end{equation}
and hence we must calculate $\displaystyle\int_{\Sigma_k} c_2 \left(\mathsf{e}_2, (\sigma^\ast(T\wP^2_k \otimes \scr{O}_{\wP^2_k}(d-1)))\right)$.
From (\ref{teo2a}) and Proposition (\ref{enum1}) we have, recalling that $w=(1,1,k)$,
\begin{equation}\label{Hloc2}
\begin{array}{ccc}
\displaystyle\int\limits_{\Sigma_k} c_2 \left(\mathsf{e}_2, (\sigma^\ast(T\wP^2_k \otimes \scr{O}_{\wP^2_k}(d-1)))\right) &{}&\, \\
=\displaystyle\int\limits_{\Sigma_k} c_2 \left(\sigma^\ast(T\wP^2_k \otimes \scr{O}_{\wP^2_k}(d-1) )\right) &-& \dfrac{(d-1)^2+(d-1)C_{1}(w)+C_{2}(w)}{k} \hfill\\
=\displaystyle\int\limits_{\Sigma_k} c_2 \left(\sigma^\ast(T\wP^2_k \otimes \scr{O}_{\wP^2_k}(d-1) )\right) &-& \dfrac{(d-1)^2+(d-1)(k+2)+(2k+1)}{k} \hfill\\
=\displaystyle\int\limits_{\Sigma_k} c_2 \left(\sigma^\ast(T\wP^2_k \otimes \scr{O}_{\wP^2_k}(d-1) )\right) &-& \dfrac{d^2 +kd+k}{k}.\hfill
\end{array}
\end{equation}
Now,
$$\begin{array}{ccc}
c_{2}(\sigma^{\ast}(T\mathbb{P}^2_k \otimes \scr{O}_{\wP^2_k}(d-1)))
&=& c_{2}(\sigma^{\ast} T\mathbb{P}^2_k) + c_{1}(\sigma^{\ast} T\mathbb{P}^2_k ) c_{1}(\sigma^{\ast}\scr{O}_{\wP^2_k}(d-1))\nonumber\\
&+ &  \left(c_{1}(\sigma^{\ast}\scr{O}_{\wP^2_k}(d-1))\right)^2\hfill
\end{array}
$$
and by using Euler's sequence (\ref{euler}) we get $c_{2}(\sigma^{\ast} T\mathbb{P}^2_k) = (2k+1) \left(c_{1}(\sigma^{\ast}\scr{O}_{\wP^2_k}(1))\right)^2$ and
$c_{1}(\sigma^{\ast} T\mathbb{P}^2_k) = (k+2) c_{1}(\sigma^{\ast}\scr{O}_{\wP^2_k}(1))$, from which we deduce
\begin{equation}\label{Hloc3}
c_{2}(\sigma^{\ast}(T\mathbb{P}^2_k \otimes \scr{O}_{\wP^2_k}(d-1))) = (d^2+kd+k) \left(c_{1}(\sigma^{\ast}\scr{O}_{\wP^2_k}(1))\right)^2.
\end{equation}
By considering the divisor $S=\{z_0=0\}$ in $\mathbb{P}^2_k$ and noticing that $\scr{O}_{\wP^2_k}(1) \cong \scr{O}_{\wP^2_k}(S)$, a straigthforward calculation gives $c_1(\sigma^{\ast}\scr{O}_{\wP^2_k}(1)) = kL+D$, $L$ a fiber, $L \cdot D=1$, $L^2=0$ and hence $\displaystyle\int_{\Sigma_k}\left(c_{1}(\sigma^{\ast}\scr{O}_{\wP^2_k}(1))\right)^2=k$. Substituting into (\ref{Hloc2}) we obtain the result.
\end{proof}

The next result gives a condition for a foliation to have a singularity at $Sing(\wP^2_k)$. We will give two proofs of it, one as a consequence of Proposition \ref{enum1} and the other by using Proposition \ref{Hres}.
\begin{prop}
Let $\F$ be a foliation of degree $d$ on $\wP^2_k$, $k>1$. Suppose $\F$ has only isolated singularities. If $k \nmid \,d^2$ then $\F$ has a singularity at $Sing(\wP^2_k) = \{\mathsf{e}_2\}$. 
\end{prop}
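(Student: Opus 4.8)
The plan is to turn the enumerative identity of Proposition~\ref{enum1} into an integrality obstruction. First I specialize that proposition to $w=(1,1,k)$, so that $n=2$, $w_0w_1w_2=k$, $C_1(w)=k+2$ and $C_2(w)=2k+1$. For the section $\xi \in H^0(T\wP^2_k \otimes \scr{O}_{\wP^2_k}(d-1))$ defining $\F$, which by hypothesis has isolated zeros, this yields
$$
\sum_{p\,|\,\xi(p)=0} I_p(\xi) = \frac{(d-1)^2 + (d-1)(k+2) + (2k+1)}{k} = \frac{d^2+kd+k}{k} = \frac{d^2}{k} + d + 1 .
$$

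Next I examine the arithmetic nature of the individual indices. By (\ref{ph1}) and (\ref{ph2}), at a regular zero $p \in (\wP^2_k)_{reg}$ the number $I_p(\xi)$ is the ordinary Poincar\'e--Hopf index of a holomorphic vector field with an isolated zero, hence a positive integer; at the unique singular point $\mathsf{e}_2$, where $\#G_{\mathsf{e}_2}=k$, one has $I_{\mathsf{e}_2}(\xi)=\tfrac1k\, I_{\tilde p}(\tilde\xi)$ with $I_{\tilde p}(\tilde\xi)\in\Z_{\geq 0}$, and $I_{\mathsf{e}_2}(\xi)>0$ precisely when $\mathsf{e}_2\in Sing(\F)$. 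Separating the sum into its regular and singular contributions gives
$$
\frac{d^2}{k} + d + 1 \;=\; \Big(\sum_{\substack{p\in (\wP^2_k)_{reg}\\ \xi(p)=0}} I_p(\xi)\Big) + I_{\mathsf{e}_2}(\xi),
$$
in which the bracketed sum is a non-negative integer.

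I then argue by contraposition. If $\F$ were non-singular at $\mathsf{e}_2$, i.e. $\xi(\mathsf{e}_2)\neq 0$, then $I_{\mathsf{e}_2}(\xi)=0$ and every zero of $\xi$ would be a regular point, so the left-hand side above would be an integer; since $d+1\in\Z$, this would force $d^2/k\in\Z$, that is $k\mid d^2$. Taking the contrapositive, $k\nmid d^2$ forces $\mathsf{e}_2\in Sing(\F)$, which is the assertion.

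A second proof runs the same integrality argument on the resolution $\sigma:\Sigma_k\to\wP^2_k$ via Proposition~\ref{Hres}: in identity (\ref{Hloc}) the indices $I_{\tilde p}(\tilde\xi)$ along the exceptional divisor $D\subset\Sigma_k$ are integers (as $\Sigma_k$ is smooth), while the right-hand side $(d^2+kd+k)(k-\tfrac1k)$ expands to an integer plus the single fractional term $-d^2/k$; so $\mathsf{e}_2\notin Sing(\F)$ again forces $k\mid d^2$. The only delicate point in either approach is the standard fact that a holomorphic vector field with an isolated zero at a smooth point has positive integral Poincar\'e--Hopf index (equal to the local intersection multiplicity); this is what guarantees that the denominator $k$ appearing in $d^2/k$ can be absorbed only by the fractional orbifold index at $\mathsf{e}_2$.
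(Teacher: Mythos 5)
Your argument is correct and is essentially the paper's own: the authors likewise give two proofs, the first reading off the integrality obstruction $d^2/k$ from Proposition~\ref{enum1} specialized to $w=(1,1,k)$ (the sum of indices being an integer when all zeros lie in the regular part), and the second extracting the same fractional term $-d^2/k$ from the resolution identity (\ref{Hloc}) on $\Sigma_k$. Your write-up only adds the explicit justification that Poincar\'e--Hopf indices at smooth points are positive integers, which the paper leaves implicit.
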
 

\noindent\emph{First proof.} In fact, suppose $\mathsf{e}_2$ is non-singular for $\F$. Then the right hand side of (\ref{enum2}) is an integer and this happens only if $k \mid d^2$.\hfill $\square$\\
\noindent\emph{Second proof.} If $\mathcal{F}$ is non-singular at  $\mathsf{e}_2$, then (\ref{Hloc}) reads
$$\sum\limits_{\tilde{p}\, \in D | \tilde{\xi}=0} {I}_{\tilde{p}}(\tilde{\xi})= (d^2+kd+k)\left(k-\frac{1}{k}\right).$$
Since the left hand side is an integer and $k>1$ we must have $k \mid d^2$.
\hfill $\square$

\begin{obs} \rm{Still in $\wP^2_k$ we observe that, in case $d \geq 0$ and $\mathsf{e}_2 \in Sing(\F)$ is non-degenerate, then $Sing(\F)$ contains at least another point. This is because, in case this does not happens, then (\ref{enum2}) reads $ \dfrac{1}{k}=\dfrac{d^2+kd+k}{k}$, which is impossible since $k>1$ and $d\geq 0$.}
\end{obs}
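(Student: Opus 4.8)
The plan is to argue by contradiction from the enumerative identity (\ref{enum2}). Suppose that $\mathsf{e}_2$ is in fact the \emph{only} singularity of $\F$, that is, $Sing(\F)=\{\mathsf{e}_2\}$; I will show this forces a numerical impossibility, and hence that $Sing(\F)$ must contain at least one further point.

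First I would evaluate the left-hand side of (\ref{enum2}). Under the contradiction hypothesis the defining section $\xi$ has $\mathsf{e}_2$ as its unique zero, so the sum collapses to the single term $\sum_{p\,|\,\xi(p)=0}\mathcal{I}_p(\xi)=I_{\mathsf{e}_2}(\xi)$. Since $\mathsf{e}_2$ is assumed to be a non-degenerate singularity, Definition \ref{nondeg} gives $\tilde{I}_{\tilde{p}}(\tilde{\xi})=1$ for the lifted field, and the Poincar\'e--Hopf formula (\ref{ph1}) then yields $I_{\mathsf{e}_2}(\xi)=\frac{1}{\#G_{\mathsf{e}_2}}$. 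As recorded just before Corollary \ref{enum3}, the local fundamental group satisfies $G_{\mathsf{e}_2}\cong\boldsymbol{\zeta}_k$, so $\#G_{\mathsf{e}_2}=k$ and the left-hand side of (\ref{enum2}) equals $\tfrac{1}{k}$.

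Next I would compute the right-hand side for $n=2$ and $w=(1,1,k)$. Here $w_0w_1w_2=k$, $C_1(w)=k+2$ and $C_2(w)=2k+1$, so the quantity $\sum_{j=0}^{2}C_j(w)(d-1)^{2-j}=(d-1)^2+(k+2)(d-1)+(2k+1)$ simplifies to $d^2+kd+k$. Thus (\ref{enum2}) reads $\frac{1}{k}=\frac{d^2+kd+k}{k}$, that is, $d^2+kd+k=1$. For $d\geq 0$ and $k>1$ each summand on the left is non-negative and the constant term already exceeds $1$, giving $d^2+kd+k\geq k>1$, which contradicts $d^2+kd+k=1$. This is the desired impossibility, so $\mathsf{e}_2$ cannot be the sole singularity.

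There is no substantial obstacle here: the argument is a direct substitution into Proposition \ref{enum1}. The only points demanding care are the correct translation of non-degeneracy into the value $I_{\mathsf{e}_2}(\xi)=\tfrac{1}{k}$ via (\ref{ph1}) together with the identification $G_{\mathsf{e}_2}\cong\boldsymbol{\zeta}_k$, and verifying that the contradiction hypothesis ``$\mathsf{e}_2$ is the only singularity'' genuinely reduces the left-hand sum to the single term $I_{\mathsf{e}_2}(\xi)$.
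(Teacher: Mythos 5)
Your argument is correct and is essentially the paper's own proof, merely spelled out in full: the paper's remark also substitutes $Sing(\F)=\{\mathsf{e}_2\}$ with a non-degenerate zero into (\ref{enum2}) to get $\tfrac{1}{k}=\tfrac{d^2+kd+k}{k}$ and derives the same contradiction from $k>1$, $d\geq 0$. Your added details---the identification $\#G_{\mathsf{e}_2}=k$, the use of (\ref{ph1}) and Definition \ref{nondeg} to get $I_{\mathsf{e}_2}(\xi)=\tfrac{1}{k}$, and the expansion $(d-1)^2+(k+2)(d-1)+(2k+1)=d^2+kd+k$---are exactly the computations the paper leaves implicit.
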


\begin{ex}\label{nonsing}
\rm{Let $\eta = (k+1) f dg - g df$ where $f(z_0,z_1,z_2) = z_0^{k+1} + z_1^{k+1} - (z_0 + z_1) z_2$ and $g(z_0, z_1, z_2)= az_0^k + b z_1^k +c z_2$, $k>1$. $\eta$ induces a foliation $\mathcal{F}$ of degree $k$ on $\wP^2_k$, with isolated singularities and $\mathcal{F}$ is non-singular at $\sf{e}_2$.}
\end{ex}

\noindent{\footnotesize
\textsc{Acknowlegments.} The third named author is grateful to IMPA and to the Univ. of Valladolid for hospitality.




\end{document}